\newtheorem{thm}{Theorem}
\newtheorem{lem}{Lemma}
\newtheorem{prop}{Proposition}
\newtheorem{rem}{Remark}
\newtheorem{cor}{Corollary} 
\newtheorem{df}{Definition} 
\title{Generalized capillary‐rise models: existence and fast solvers in integral H\"older spaces}
\author{
Josefa Caballero\thanks{Departamento de Matem\'aticas, Universidad de Las Palmas de Gran Canaria, Campus de Tafira Baja, $35017$ Las Palmas de Gran Canaria, Spain.}, \and 
{\L}ukasz P{\l}ociniczak\thanks{Faculty of Pure and Applied Mathematics, Wroclaw University of Science and Technology, Wyb. Wyspia\'nskiego 27, 50-370 Wroc{\l}aw, Poland, \underline{corresponding author:} \texttt{lukasz.plociniczak@pwr.edu.pl}}, \vspace{4pt}\and
Kishin  Sadarangani$^*$
}
\date{}
\begin{document}
\maketitle

\begin{abstract}
	We study a class of nonlinear Volterra integral equations that generalize the classical capillary rise models, allowing for nonsmooth kernels and nonlinearities. To accommodate such generalities, we work in two families of function spaces: spaces with prescribed modulus of continuity and integral H\"older spaces. We establish existence results for solutions within the integral H\"older space framework. Furthermore, we analyze the behavior of linear interpolation in these spaces and provide, for the first time, sharp error estimates, demonstrating their optimality. Building on this foundation, we propose a piecewise linear collocation method tailored to solutions in integral H\"older spaces and prove its convergence. For problems admitting smoother solutions, we develop an efficient spectral collocation scheme based on Legendre nodes. Finally, several numerical experiments illustrate the theoretical results and highlight the performance of the proposed methods.
\end{abstract}

\textbf{Keywords:} nonlinear Volterra integral equation, capillary rise equation, collocation scheme, integral H\"older spaces, interpolation\\

\textbf{MSC:} 45D05, 65R20, 65N35

\section{Introduction}

Volterra integral equations arise naturally in the modeling of many physical, biological, and computational processes where the current state depends on a history of past values. Probably the first emergence of an integral equation was the ingenious solution of the brachistochrone problem by Abel (see an interesting account of the history, theory, and applications of integral equations in \cite{brunner2017volterra}). The Abel equations also arise in tomography \cite{deutsch1990inversion}, inverse heat conduction problems that are known to be severely ill-posed \cite{beck1996comparison,lamm2000survey}, viscoelasticity \cite{anderssen2008volterra}, and geophysics \cite{eskola2012geophysical} to name only a few important classical models. More general nonlinear Volterra equations have been used as successful tools in population modeling \cite{brauer1975nonlinear,brauer1976constant}, epidemiology \cite{guan2022global}, and fluid dynamics \cite{curle1978solution, keller1981propagation}. Furthermore, some more recent examples include the flow of fluids through exotic porous media \cite{plociniczak2018existence,lopez2024time, gerolymatou2006modelling}, financial mathematics \cite{zhu2018new}, and a very interesting new approach based on neural integral equations in which the kernel is an artificial neural network \cite{zappala2024learning}. Another well-known case is the capillary rise equation, which describes the motion of a liquid in a thin tube and involves a nonlinear memory term \cite{cai2021lucas, plociniczak2018monotonicity,okrasinska2020solvability} and can be stated as follows
\begin{equation}\label{eqn:CapillaryRiseEq}
	x(t) = \int_0^1 (1-e^{-(t-s)}) \left(1-\sqrt{2 x(s)}\right)ds
\end{equation}
We can see that although the integrand is continuous, it is not smooth - the nonlinearity belongs to the H\"older space. Similar problems have historically appeared in modeling infiltration in porous media \cite{bushell1990nonlinear}. Sometimes, the nonlinearity and a source, can lead to a blow-up phenomenon that has attracted a lot of research \cite{olmstead1995coupled}. In addition, Volterra integral equations arise very frequently in analysis of the fractional differential operator, since they are actually a combination of ordinary derivatives with integrals of a weakly singular kernel (see \cite{kilbas2006theory}). The capillary rise equation \eqref{eqn:CapillaryRiseEq} has been studied by many authors in both differential \cite{grunding2020enhanced, quere1997inertial, zhmud2000dynamics} and integral forms. For example, a series of works on the solvability, monotonicity and oscillations of the authors and their collaborators have been investigated \cite{okrasinska2020solvability, plociniczak2018monotonicity, plociniczak2021oscillatory}. Further studies included generalization to nonsmooth coefficients \cite{fricke2023analytical}, complete description of the phase plane \cite{zhang2018dynamics}, elliptical geometry of the capillary \cite{alassar2022note}, solvability in weighted L1 spaces \cite{metwali2022solvability}, and recently the full account for a general case of slip boundary conditions with a thorough theoretical analysis \cite{rapajic2025modelling}. 

In this paper, we consider a class of nonlinear Volterra equations of the form
\begin{equation}\label{eqn:MainEq0}
	x(t) = \int_0^t G(t,s) f(s, x(s)) \, ds,
\end{equation}
where the kernel $G$ and the nonlinearity $f$ are assumed to satisfy only weak regularity assumptions that provide a natural generalization of \eqref{eqn:CapillaryRiseEq}. This lack of smoothness introduces significant challenges both analytically and numerically, since classical existence and approximation results often rely on stronger regularity conditions. To analyze these equations, we work within the framework of integral H\"older spaces $J_{\alpha,\beta}[0,1]$, which provide a more flexible setting for functions with low regularity. These spaces are defined through an integral modulus of continuity, making them well suited for studying solutions that are not differentiable in the classical sense. They were introduced in \cite{appell2021holder,appell1987application} as generalizations of the classical H\"older spaces. They are strictly larger than these, as was proved in \cite{appell1987application, cichon2022banach}. In this paper, we prove the existence of solutions of \eqref{eqn:MainEq0} in these integral H\"older spaces under appropriate weak conditions on $G$ and $f$. Furthermore, we develop and analyze two numerical schemes tailored to the problem's regularity: a piecewise linear collocation method for low-regularity solutions and a spectral collocation method for smooth solutions. These methods are designed to respect the structure of the integral equation and the properties of the function space.

Volterra integral equations have been solved by many numerical methods and we refer the reader to detailed accounts in \cite{atkinson1997numerical} (a collection of various methods) and \cite{brunner2004collocation} (collocation schemes). In the case of nonsmooth solutions, the collocation scheme has been applied in \cite{riele1982collocation} to a nonlinear Volterra equation with a weakly singular kernel. Some more recent results include an application to the third-kind Volterra equation \cite{ma2023fractional} and the authors' work on collocation schemes for functional equations in the setting of H\"older spaces \cite{ okrasinska2020solvability,caballero2025collocation} (see also \cite{brunner2011analysis} for relevant results concerning functional equations). On the other hand, spectral methods have been applied to linear integral equations in \cite{tang2008spectral}. Further developments have been given in \cite{samadi2012spectral} (systems), \cite{chen2010convergence} (weakly singular kernels), \cite{sheng2014multistep, zakeri2010sinc} (nonlinear equations). See also references therein.  

The main and new results presented in this paper can be summarized as follows.
\begin{itemize}
	\item Existence theorem for \eqref{eqn:MainEq0} in the integral-H\"older space under low-regularity assumptions,
	\item piecewise linear interpolation theory in the integral-H\"older space,
	\item linear collocation scheme with the convergence proof,
	\item spectral collocation scheme with the convergence proof.
\end{itemize}
According to the authors' knowledge, our findings are the first ones in the literature that concern the above points. Moreover, devising convergent numerical schemes for approximating non-smooth solutions is usually a challenging task. For it, a careful analysis is always required, since one does not have access to the derivatives that simplify the estimation of the error. A key part of our reasoning involves deriving new interpolation error estimates in $J_{\alpha,\beta}[0,1]$. These bounds are not available in the current literature and allow us to control the maximum norm of the interpolation error using norms that naturally arise in this space (similar to Besov norms). This is essential to establish the convergence and reliability of our numerical methods. To obtain these, we borrow techniques from harmonic analysis and interpolation spaces (dyadic decomposition). For completeness, we also devise a Legendre spectral collocation scheme to solve \eqref{eqn:MainEq0} for the cases where the solution is smooth. Therefore, we present a concise framework for collocation schemes that can readily be utilized for nonsmooth and smooth solutions of our nonlinear Volterra equation. Numerical computations verify our claims and, actually, behave even better than our theoretical estimates find.  

First, we develop the existence theory for \eqref{eqn:MainEq0}. Then, in Section 3 we move to developing a numerical approach via the collocation schemes. In particular, we thoroughly treat the interpolation in integral H\"older spaces. The last section describes several numerical examples that verify our theory. 

\section{The integral equation}
In this section, we develop the theory of treating the main integral equation in a functional analytical setting. 

\subsection{Function space preliminaries}
We start by stating various definitions that will be needed for our results.
\begin{df}
	A function $\rho: \mathbb{R}_+ \mapsto \mathbb{R}_+$ is a \textbf{modulus of continuity} when it satisfies the following conditions:
	\begin{enumerate}
		\item [i)] $\rho(0) = 0$ and $\rho(\epsilon) > 0$ for all $\epsilon > 0$,
		\item [ii)] $\rho$ is nondecreasing,
		\item [iii)] $\rho(\epsilon)$ is continuous at $\epsilon = 0$.
	\end{enumerate}
\end{df}
\noindent It is also true that a given continuous function induces some modulus of continuity. However, there is a unique minimal modulus defined as follows. 
\begin{df}
	Suppose that $x: [0,1] \mapsto \mathbb{R}$. The \textbf{minimal modulus of continuity of} $\mathbf{x}$ is the function $\omega(x, \cdot): [0,1] \mapsto \mathbb{R}_+$ given by
	\begin{equation}
		\omega(x, \sigma) := \sup\left\{|x(t) - x(s)|: \, |t-s| \leq \sigma\right\} \quad \text{for any} \quad 0 \leq \sigma \leq 1.
	\end{equation}
\end{df}
\noindent Of course, a modulus of continuity of a given continuous function is indeed a modulus of continuity according to the first definition. Nonnegativity and continuity at zero are trivial, whereas monotonicity follows from the fact that taking the supremum over a larger set cannot decrease it. Therefore, both of the above definitions are compatible. We can now define the space of functions with a continuity controlled by $\rho$
\begin{equation}
	C_\rho[0,1] := \left\{x: [0,1] \mapsto \mathbb{R} \text{ such that for any } t, s\in [0,1] \text{ and } t\neq s \text{ we have } \frac{|x(t)- x(s)|}{\rho(|t-s|)} < \infty \right\}.
\end{equation} 
It is known that if we endow the above with the norm
\begin{equation}
	\|x\|_\rho := |x(0)| + \sup_{\stackrel{t,s\in [0,1]}{ t \neq s}} \frac{|x(t)- x(s)|}{\rho(|t-s|)},
\end{equation} 
then $C_\rho[0,1]$ becomes a Banach space. Of course, we have $C_\rho[0,1] \subseteq C[0,1]$ since for any $x\in C_\rho[0,1]$ we have
\begin{equation}
	|x(t) - x(t)| \leq \|x\|_\rho \; \rho(|t-s|).
\end{equation}
The continuity of $x$ follows from the fact that $\rho$ is continuous at zero. 

Consider the following integral equation of the Volterra type
\begin{equation}\label{eqn:MainEq}
	x(t) = \int_0^t G(t,s) f(s, x(s)) ds, \quad t \in[0,1].
\end{equation}
In what follows, we assume the following regularity of the kernel
\begin{equation}\label{eqn:AssumG}
	\tag{H1}
	G \in C([0,1]^2), \quad M_G := \max_{s,t\in[0,1]} |G(s,t)|, \quad G(\cdot, s) \in C_\rho[0,1], \quad \|G(\cdot, s)\|_\rho \leq K_G, \quad s \in [0,1],
\end{equation}
for some constants $M_G, K_G > 0$. That is, we assume a rather weak regularity of $G$ controlled by a given modulus of continuity $\rho$ uniformly with respect to the second variable. Note that the existence of $M_G$ is automatic, as it follows from the continuity of $G$ on $[0,1]^2$. 

The nonlinearity of the equation $f: [0,1]\times\mathbb{R} \mapsto \mathbb{R}$ is assumed to satisfy
\begin{equation}\label{eqn:AssumF}
	\tag{H2}
	|f(t,x) - f(t, y)| \leq \phi(|x-y|) \quad \text{for any} \quad t \in [0,1] \quad \text{and}\quad M_f := \sup\left\{|f(t,0)|: \, t\in[0,1]\right\} < \infty,
\end{equation}
where $\phi: \mathbb{R}_+ \mapsto \mathbb{R}_+$ is nondecreasing. 

\begin{rem}
	Generally, in various boundary value problems that appear in the literature, the kernel $G$ is actually the Green function corresponding to an underlying differential equation. It usually happens that $G(\cdot, s) \in H_\gamma[0,1]$ with $0<\gamma\leq 1$, that is, it is $\gamma$-H"older continuous. For that space, the corresponding modulus of continuity is $\rho(t) = t^\gamma$. However, there are $C_\rho[0,1]$ spaces that do not arise from the H\"older space.
\end{rem}

The main space in which we will look for solutions of \eqref{eqn:MainEq} is a certain generalization of the H\"older space. The functions belonging to that space have an integrable modulus of continuity. In \cite{appell2021holder,appell1987application} the authors considered the following definition.
\begin{df}
	Let $0<\alpha\leq 1$ and $\alpha \leq \beta < \infty$. For $0<s\leq 1$ put
	\begin{equation}
		j_{\alpha,\beta}(x, [0,s]) := \int_0^s \sigma^{-(\beta+1)} \omega(x, \sigma)^\frac{\beta}{\alpha} d\sigma.
	\end{equation}
	The space $J_{\alpha,\beta}[0,1]$ of continuous functions satisfying $j_{\alpha,\beta}(x, [0,1]) < \infty$ is called the \textbf{integral-type H\"older space}. In an analogous way, we defined the space $J_{\alpha,\beta}[a,b]$ for arbitrary $a<b$. 
\end{df}
\noindent It can also be proved that if we introduce the following norm
\begin{equation}
	\|x\|_{\alpha, \beta} = |x(0)| + j_{\alpha,\beta} (x, [0,1])^\frac{\alpha}{\beta},
\end{equation} 
we find that $(J_{\alpha,\beta}[0,1], \|\cdot\|_{\alpha,\beta})$ is a Banach space. 

\begin{rem}
	If we define the integral-type H\"older space with $\beta = \infty$ by
	\begin{equation}
		j_{\alpha,\infty}(x, [0,1]) := \sup\left\{\sigma^{-\alpha} \omega(x, \sigma): 0<\sigma\leq 1\right\},
	\end{equation}
	then, as can be shown \cite{appell1987application} (and also see Corollary \ref{cor:LInftyJBound} below), the corresponding space $J_{\alpha,\infty}$ is actually equal to $H_\alpha[0,1]$. Moreover, we have
	\begin{equation}
		\lim\limits_{\beta\rightarrow\infty} J_{\alpha,\beta}[0,1] = H_\alpha[0,1].
	\end{equation}
	That is, the H\"older space is recovered as a particular limit of $J_{\alpha,\beta}[0,1]$. This also justifies the name of the space considered.
\end{rem}

\begin{rem}
	The space $J_{\alpha,\beta}[0,1]$ with $0<\alpha\leq \beta\leq \infty$ is strictly larger than the H\"older space $H_\gamma[0,1]$ for any $0<\gamma<1$. 
\end{rem}

We will also work in the usual Lebesgue spaces $L^p[0,1]$ with $1\leq p \leq \infty$ with the norms denoted by $\|\cdot\|_p$. 

\subsection{Existence of solutions}
For the study of \eqref{eqn:MainEq} we define the following nonlinear operator
\begin{equation}\label{eqn:OpeartorT}
	(Tx)(t) := \int_0^t G(t,s) f(s, x(s)) ds, \quad t \in [0,1].
\end{equation}
The next result provides a bound on the modulus of continuity generated by $Tx$.
\begin{lem}\label{lem:ModulusT}
	Suppose that $x\in C[0,1]$, then for $0<\sigma \leq 1$ we have
	\begin{equation}\label{eqn:ModulusT}
		\omega(Tx, \sigma) \leq \left(\phi(\|x\|_\infty) + M_f\right) \left(K_G\rho(\sigma) + \sigma M_G\right)
	\end{equation}
\end{lem}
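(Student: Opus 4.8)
The plan is to estimate $|(Tx)(t) - (Tx)(s)|$ for $|t-s| \le \sigma$ directly from the definition of $T$, splitting the difference into a part that measures the variation of the kernel and a part coming from the extra piece of the integration interval. Assume without loss of generality $s \le t$. I would write
\begin{equation*}
(Tx)(t) - (Tx)(s) = \int_0^s \bigl(G(t,\tau) - G(s,\tau)\bigr) f(\tau, x(\tau))\, d\tau + \int_s^t G(t,\tau) f(\tau, x(\tau))\, d\tau,
\end{equation*}
so that by the triangle inequality
\begin{equation*}
|(Tx)(t) - (Tx)(s)| \le \int_0^s |G(t,\tau) - G(s,\tau)|\,|f(\tau,x(\tau))|\, d\tau + \int_s^t |G(t,\tau)|\,|f(\tau,x(\tau))|\, d\tau.
\end{equation*}

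The second ingredient is a uniform bound on $|f(\tau, x(\tau))|$. Using (H2) with $y = 0$ and the monotonicity of $\phi$, one gets $|f(\tau, x(\tau))| \le |f(\tau, x(\tau)) - f(\tau, 0)| + |f(\tau,0)| \le \phi(|x(\tau)|) + M_f \le \phi(\|x\|_\infty) + M_f$ for all $\tau \in [0,1]$, since $x \in C[0,1]$ is bounded. Call this constant $C_x := \phi(\|x\|_\infty) + M_f$. For the first integral, assumption (H1) gives $|G(t,\tau) - G(s,\tau)| \le \|G(\cdot,\tau)\|_\rho\, \rho(|t-s|) \le K_G\, \rho(\sigma)$, and integrating over $\tau \in [0,s] \subseteq [0,1]$ contributes at most $C_x K_G \rho(\sigma)$. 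For the second integral, $|G(t,\tau)| \le M_G$ and the interval $[s,t]$ has length $t - s \le \sigma$, giving a contribution of at most $C_x M_G \sigma$. Adding the two bounds yields $|(Tx)(t) - (Tx)(s)| \le C_x (K_G \rho(\sigma) + M_G \sigma)$; taking the supremum over all admissible pairs $t,s$ gives exactly \eqref{eqn:ModulusT}.

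There is no serious obstacle here — the argument is a routine splitting-and-bounding estimate. The only point requiring a small amount of care is the decomposition of the interval: one must peel off $[s,t]$ and compare the kernels only on the common subinterval $[0,s]$, rather than trying to bound everything by the kernel's modulus of continuity (which would lose the $M_G \sigma$ term and fail when $\rho$ decays faster than linearly near zero). One should also note that $\rho(|t-s|) \le \rho(\sigma)$ uses the monotonicity of $\rho$, and that $[0,s]$ and $[s,t]$ are subsets of $[0,1]$ so that the uniform bounds $K_G$, $M_G$ from (H1) apply.
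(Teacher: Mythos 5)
Your proposal is correct and follows essentially the same route as the paper: the same splitting of the difference into a kernel-variation integral over the common subinterval plus a tail integral over $[s,t]$, the same bound $|f(\tau,x(\tau))|\le\phi(\|x\|_\infty)+M_f$ from (H2), and the same use of $K_G$ and $M_G$ from (H1). No gaps.
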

\begin{proof}
	We would like to estimate $\omega(Tx, \sigma)$ for $0<\sigma \leq 1$. Take $t,\tau \in [0,1]$ with $|t-\tau| \leq \sigma$. Without loss of generality, we assume that $t > \tau$. Then by \eqref{eqn:OpeartorT} and subtraction/addition of the integral of $G(\tau, s)f(s, x(s))$, we have
	\begin{equation}
		|(Tx)(t) - (Tx)(\tau)| \leq \int_0^\tau |G(t,s) - G(\tau, s)| |f(s, x(s))| ds + \int_\tau^t |G(t,s)| |f(s, x(s))| ds.
	\end{equation}
	Furthermore, by the regularity assumption on the kernel \eqref{eqn:AssumG} we further can deduce
	\begin{equation}
		\begin{split}
			|(Tx)(t) - (Tx)(\tau)| 
			&\leq K_G \int_0^\tau \rho(|t-s|) \left(|f(s, x(s)) - f(s, 0)| + |f(s,0)| \right)ds \\
			&+ M_G \int_\tau^t \left(|f(s, x(s)) - f(s, 0)| + |f(s,0)| \right) ds.
		\end{split}
	\end{equation}
	Since the modulus of continuity is nondecreasing and again using the regularity of $G$, we obtain
	\begin{equation}
		\begin{split}
			|(Tx)(t) - (Tx)(\tau)| 
			&\leq K_G \rho(\sigma) \int_0^\tau \left(|f(s, x(s)) - f(s, 0)| + |f(s,0)| \right)ds\\
			&+ M_G \int_\tau^t \left(|f(s, x(s)) - f(s, 0)| + |f(s,0)| ds \right).
		\end{split}
	\end{equation}
	Next, we use assumptions on $f$, that is, \eqref{eqn:AssumF}, to obtain
	\begin{equation}
		|(Tx)(t) - (Tx)(\tau)| \leq K_G \rho(\sigma) \int_0^\tau \left(\phi(|x(s)|) + M_f \right)ds + M_G \int_\tau^t \left(\phi(|x(s)|) + M_f \right)ds.
	\end{equation}
	Finally, since $\phi$ is nondecreasing, we have
	\begin{equation}
		|(Tx)(t) - (Tx)(\tau)| \leq \left(\phi(\|x\|_\infty) + M_f \right)\left(K_G\rho(\sigma) \tau + M_G |t-\tau|\right).
	\end{equation}
	But $0\leq \tau\leq 1$ and $|t-\tau|\leq \sigma$, therefore we have proved \eqref{eqn:ModulusT}. 
\end{proof}

We can see that $Tx$ is a uniformly continuous function with a modulus of continuity being the worse of $\rho(\sigma)$ (which is inherited from the kernel) and $\sigma$ (Lipschitz continuity). The next result states that $T$ maps $J_{\alpha,\beta}[0,1]$ to itself.
\begin{prop}\label{prop:TNorm}
	Suppose that $0<\alpha\leq 1$ and $\alpha \leq \beta < \infty$. Assume that
	\begin{equation}\label{eqn:AssumRho}
		\int_0^1 \sigma^{-(\beta+1)} \rho(\sigma)^\frac{\beta}{\alpha} d\sigma < \infty.
	\end{equation}
	Then, for any $x \in J_{\alpha,\beta}[0,1]$ we have $Tx \in J_{\alpha,\beta}[0,1]$ and
	\begin{equation}\label{eqn:TNorm}
		\|Tx\|_{\alpha,\beta} \leq \left(\phi(\|x\|_{\alpha,\beta}) + M_f\right)  \left(\int_0^1 \sigma^{-(\beta+1)} \left(K_G\rho(\sigma) + \sigma M_G \right)^\frac{\beta}{\alpha} d\sigma\right)^\frac{\alpha}{\beta}.
	\end{equation}
\end{prop}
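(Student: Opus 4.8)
The plan is to reduce the estimate to Lemma~\ref{lem:ModulusT} and then to a direct computation of the seminorm $j_{\alpha,\beta}$. First I would note that since $x\in J_{\alpha,\beta}[0,1]\subseteq C[0,1]$, the hypotheses of Lemma~\ref{lem:ModulusT} are satisfied, so
\begin{equation}
	\omega(Tx,\sigma)\leq \bigl(\phi(\|x\|_\infty)+M_f\bigr)\bigl(K_G\rho(\sigma)+\sigma M_G\bigr),\qquad 0<\sigma\leq 1.
\end{equation}
Raising both sides to the power $\beta/\alpha$ (legitimate since everything is nonnegative and $x\mapsto x^{\beta/\alpha}$ is nondecreasing on $\mathbb{R}_+$), multiplying by $\sigma^{-(\beta+1)}$ and integrating over $(0,1]$ yields
\begin{equation}
	j_{\alpha,\beta}(Tx,[0,1])\leq \bigl(\phi(\|x\|_\infty)+M_f\bigr)^{\beta/\alpha}\int_0^1\sigma^{-(\beta+1)}\bigl(K_G\rho(\sigma)+\sigma M_G\bigr)^{\beta/\alpha}d\sigma.
\end{equation}
The finiteness of this last integral is the one nontrivial point: splitting $(a+b)^{\beta/\alpha}\leq C_{\alpha,\beta}(a^{\beta/\alpha}+b^{\beta/\alpha})$, the $\rho$-term is integrable by assumption \eqref{eqn:AssumRho}, and the $\sigma M_G$-term contributes $\int_0^1\sigma^{-(\beta+1)}\sigma^{\beta/\alpha}d\sigma=\int_0^1\sigma^{\beta/\alpha-\beta-1}d\sigma$, which converges precisely because $\beta/\alpha-\beta-1>-1$ is equivalent to $\beta(1/\alpha-1)>0$, true since $\alpha\leq 1$ (and if $\alpha=1$ the exponent is $-1$… ). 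This last subcase is where I expect the main obstacle, and I would resolve it by observing that when $\alpha=1$ one has $\rho(\sigma)\geq \sigma^{?}$ is not assumed, but rather that $\rho$ dominates $\sigma$ up to a constant near $0$ — indeed a modulus of continuity of a nonconstant function on $[0,1]$ satisfies $\rho(\sigma)\geq c\sigma$ is also false; instead I would absorb the $\sigma M_G$ term into the $K_G\rho(\sigma)$ term using that \eqref{eqn:AssumRho} with $\alpha=1$ forces $\rho(\sigma)/\sigma$ to be bounded below away from zero is still not automatic, so the cleanest fix is to simply keep the combined integrand $\bigl(K_G\rho(\sigma)+\sigma M_G\bigr)^{\beta/\alpha}$ intact and argue its integrability directly: since $\rho(\sigma)\to 0$ and $\rho$ is nondecreasing while $\sigma\mapsto\sigma$ is comparable, and \eqref{eqn:AssumRho} controls the $\rho$ part, the sum is integrable iff each summand is, and for $\alpha=1$ the $\sigma M_G$ summand gives $\int_0^1 \sigma^{-\beta}\sigma^{\beta}\,d\sigma/\sigma$ which diverges unless one also notes $M_G$ may be incorporated — I would flag that the intended reading is $\alpha<1$ or that $\rho(\sigma)\gtrsim\sigma$ is implicitly part of the standing hypotheses, and proceed.

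Next I would convert the seminorm bound into a norm bound. Applying $(\cdot)^{\alpha/\beta}$ to the displayed inequality and using subadditivity of $t\mapsto t^{\alpha/\beta}$ (valid since $\alpha/\beta\leq 1$) if needed, together with $\|Tx\|_{\alpha,\beta}=|(Tx)(0)|+j_{\alpha,\beta}(Tx,[0,1])^{\alpha/\beta}$ and the fact that $(Tx)(0)=\int_0^0 G(0,s)f(s,x(s))\,ds=0$, I obtain
\begin{equation}
	\|Tx\|_{\alpha,\beta}\leq \bigl(\phi(\|x\|_\infty)+M_f\bigr)\left(\int_0^1\sigma^{-(\beta+1)}\bigl(K_G\rho(\sigma)+\sigma M_G\bigr)^{\beta/\alpha}d\sigma\right)^{\alpha/\beta}.
\end{equation}
Finally, to reach the stated form I replace $\|x\|_\infty$ by $\|x\|_{\alpha,\beta}$: this requires the embedding $J_{\alpha,\beta}[0,1]\hookrightarrow C[0,1]$ with $\|x\|_\infty\leq C\|x\|_{\alpha,\beta}$ for a constant $C$ that I would like to be $1$. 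Since $|x(t)|\leq |x(0)|+\omega(x,1)$ and one needs $\omega(x,1)\leq j_{\alpha,\beta}(x,[0,1])^{\alpha/\beta}$; this follows from the monotonicity of $\omega$ and the estimate $j_{\alpha,\beta}(x,[0,1])\geq \omega(x,s_0)^{\beta/\alpha}\int_{s_0}^1\sigma^{-(\beta+1)}d\sigma$ for a suitable $s_0$, giving $\|x\|_\infty\leq \|x\|_{\alpha,\beta}$ after choosing constants appropriately (this is the content of the Corollary \ref{cor:LInftyJBound} alluded to in the paper, which I would invoke). Combining with monotonicity of $\phi$ gives $\phi(\|x\|_\infty)\leq\phi(\|x\|_{\alpha,\beta})$ and hence \eqref{eqn:TNorm}. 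The finiteness $Tx\in J_{\alpha,\beta}[0,1]$ is then immediate since the right-hand side is finite.

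In summary: (i) invoke Lemma~\ref{lem:ModulusT}; (ii) raise to the $\beta/\alpha$ power and integrate, checking convergence of the resulting integral via \eqref{eqn:AssumRho} and a power-counting argument; (iii) take the $\alpha/\beta$ power and use $(Tx)(0)=0$; (iv) pass from $\|x\|_\infty$ to $\|x\|_{\alpha,\beta}$ via the embedding and monotonicity of $\phi$. The main obstacle is step (ii), specifically justifying integrability of $\sigma^{-(\beta+1)}(\sigma M_G)^{\beta/\alpha}$ near $0$ in the borderline case $\alpha=1$; I would either restrict to $\alpha<1$, or observe that the standing assumption \eqref{eqn:AssumRho} already encodes enough decay of $\rho$ near the origin to make the combined integrand integrable, and phrase the argument around the combined term $K_G\rho(\sigma)+\sigma M_G$ rather than splitting it.
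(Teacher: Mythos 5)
Your proposal follows essentially the same route as the paper's proof: invoke Lemma~\ref{lem:ModulusT}, raise to the power $\beta/\alpha$ and integrate against $\sigma^{-(\beta+1)}$, split $(a+b)^{\beta/\alpha}\le 2^{\beta/\alpha-1}(a^{\beta/\alpha}+b^{\beta/\alpha})$ to check convergence via \eqref{eqn:AssumRho}, use $(Tx)(0)=0$, and pass from $\|x\|_\infty$ to $\|x\|_{\alpha,\beta}$ by monotonicity of $\phi$. The borderline case $\alpha=1$ you flag is a genuine observation, but the same gap sits in the paper's own proof, which justifies convergence of $\int_0^1\sigma^{\beta(1/\alpha-1)-1}\,d\sigma$ by the inequality $\beta(1/\alpha-1)-1>-1$, valid only for $\alpha<1$ (consistent with Theorem~\ref{thm:Existence}, which is stated for $0<\alpha<1$).
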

\begin{proof}
	Let $x\in J_{\alpha,\beta}[0,1]$. Using Lemma \ref{lem:ModulusT} we estimate $j_{\alpha,\beta}(Tx, [0,1])$ as follows
	\begin{equation}\label{eqn:HolderT}
		j_{\alpha,\beta}(Tx, [0,1]) \leq \int_0^1 \sigma^{-(\beta+1)} \omega(Tx, \sigma)^\frac{\beta}{\alpha} d\sigma \leq \left(\phi(\|x\|_\infty) + M_f\right)^\frac{\beta}{\alpha} \int_0^1 \sigma^{-(\beta+1)} \left(K_G\rho(\sigma) + \sigma M_G\right)^\frac{\beta}{\alpha} d\sigma. 
	\end{equation}
	Therefore, by our assumption and the fact that $\|x\|_\infty \leq \|x\|_{\alpha,\beta}$, we have
	\begin{equation}
		j_{\alpha,\beta}(Tx, [0,1]) \leq \left(\max\{K_G, M_G\}\left(\phi(\|x\|_{\alpha,\beta}) + M_f\right)\right)^\frac{\beta}{\alpha} \int_0^1 \sigma^{-(\beta+1)} \left(\rho(\sigma) + \sigma\right)^\frac{\beta}{\alpha} d\sigma
	\end{equation}
	Now, due to the H\"older inequality $(a+b)^\gamma \leq 2^{\gamma-1} (a^\gamma + b^\gamma)$ for $\gamma \geq 1$, we have for $\beta/\alpha \geq 1$
	\begin{equation}
		\int_0^1 \sigma^{-(\beta+1)} \left(\rho(\sigma) + \sigma \right)^\frac{\beta}{\alpha} d\sigma \leq 2^{\frac{\beta}{\alpha}-1}\left(\int_0^1 \sigma^{-(\beta+1)} \rho(\sigma)^\frac{\beta}{\alpha} d\sigma + \int_0^1 \sigma^{-(\beta+1)} \sigma^\frac{\beta}{\alpha} d\sigma\right).
	\end{equation}
	The second integral is convergent since $\beta(1/\alpha-1) -1 > -1$, while the first is finite due to our assumption. Hence, $Tx \in J_{\alpha,\beta}[0,1]$ with \eqref{eqn:HolderT} being the optimal estimate. Since $Tx(0) = 0$, we prove \eqref{eqn:TNorm}. 
\end{proof}
Notice that due to \eqref{eqn:AssumG} the condition \eqref{eqn:AssumRho} means that $G(\cdot, s) \in J_{\alpha,\beta}[0,1]$ for each $s\in [0,1]$.
\begin{rem}
	The condition \eqref{eqn:AssumRho} is satisfied, for example, by $\rho(\sigma) = C\sigma^\gamma$ with $0<\gamma < \alpha$ as a simple integration shows. 
\end{rem}
\begin{rem}\label{rem:R0}
	Suppose that there exists $r_0 > 0$ such that
	\begin{equation}\label{eqn:AssumR0}
		\left(\phi(r_0) + M_f\right)  \left(\int_0^1 \sigma^{-(\beta+1)} \left(K_G\rho(\sigma) + \sigma M_G \right)^\frac{\beta}{\alpha} d\sigma\right)^\frac{\alpha}{\beta} < r_0.
	\end{equation}
	Then, $T(B_{r_0}^{\alpha,\beta}) \subseteq B_{r_0}^{\alpha,\beta}$, where $B_{r_0}^{\alpha,\beta}$ is the ball in $J_{\alpha,\beta}[0,1]$ centered at zero with radius $r_0 >0$. This follows from \eqref{eqn:TNorm} and the fact that $\phi$ is nondecreasing. 
\end{rem}

To show the existence of a solution to the equation \eqref{eqn:MainEq} in $J_{\alpha,\beta}[0,1]$, we will use the technique of using the measure of noncompactness.
\begin{df}
	The \textbf{Hausdorff measure of noncompactness} $\chi$ of a bounded set $X$ in a Banach space $B$ is defined by
	\begin{equation}
		\chi(M) := \inf\left\{\epsilon: \text{ X admits a finite } \epsilon-\text{net in }X\right\}.
	\end{equation}
\end{df}
Observe that, due to the definition, if $\chi(X) = 0$ then $X$ is relatively compact in the Banach space $B$. Moreover, as can be easily seen, in general it may be very difficult to obtain a explicit formula for the measure of noncompactness. In practice, one usually looks for bounds of $\chi$. The following is very useful in our setting. 
\begin{prop}[\cite{appell2021holder}, Proposition 6.3]
	For any bounded set $X$ in $J_{\alpha, \beta}$ define
	\begin{equation}
		c(X) = \limsup\limits_{s\rightarrow 0} \left(\sup_{x\in X} j_{\alpha,\beta}(x, [0,s])\right).
	\end{equation}
	Then, the Hausdorff measure of noncompactness $\chi(X)$ is related with $c(X)$ according to
	\begin{equation}
		2^{-\frac{\beta}{\alpha}} \chi(X) \leq c(x) \leq 2^\frac{\beta}{\alpha} \chi(X).
	\end{equation}
\end{prop}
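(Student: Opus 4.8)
The plan is to establish the two inequalities separately. The right-hand estimate $c(X)\le 2^{\frac{\beta}{\alpha}}\chi(X)^{\frac{\beta}{\alpha}}$ is elementary and uses only a finite net together with subadditivity; the left-hand estimate $2^{-\frac{\beta}{\alpha}}\chi(X)^{\frac{\beta}{\alpha}}\le c(X)$ is the substantial one, and I would prove it by exhibiting an explicit finite net for $X$ whose radius is governed by $c(X)$. Note that the constants $2^{\pm\beta/\alpha}$ already signal the correct homogeneity: since $j_{\alpha,\beta}(\lambda x,[0,s])=|\lambda|^{\beta/\alpha}j_{\alpha,\beta}(x,[0,s])$ while $\chi$ is homogeneous of degree one, the scale-consistent form of the displayed relation carries the exponent $\beta/\alpha$ on $\chi(X)$, which is what the argument below produces.

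For the upper bound I would fix $\epsilon>\chi(X)$ and take a finite $\epsilon$-net $\{x_1,\dots,x_n\}\subset J_{\alpha,\beta}[0,1]$ for $X$. Two ingredients suffice: first, subadditivity of the minimal modulus, $\omega(x,\sigma)\le\omega(x-x_i,\sigma)+\omega(x_i,\sigma)$, combined with the convexity inequality $(a+b)^{\beta/\alpha}\le 2^{\frac{\beta}{\alpha}-1}(a^{\beta/\alpha}+b^{\beta/\alpha})$ already invoked in Proposition \ref{prop:TNorm}, which gives $j_{\alpha,\beta}(x,[0,s])\le 2^{\frac{\beta}{\alpha}-1}\big(j_{\alpha,\beta}(x-x_i,[0,s])+j_{\alpha,\beta}(x_i,[0,s])\big)$; and second, the fact that each center lies in $J_{\alpha,\beta}[0,1]$, so its tail $j_{\alpha,\beta}(x_i,[0,s])\to0$ as $s\to0$ (convergence of the defining integral), while $j_{\alpha,\beta}(x-x_i,[0,s])\le j_{\alpha,\beta}(x-x_i,[0,1])\le\|x-x_i\|_{\alpha,\beta}^{\beta/\alpha}\le\epsilon^{\beta/\alpha}$. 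Selecting for each $x\in X$ the nearest center and passing to $\limsup_{s\to0}$ annihilates the center tails, leaving $c(X)\le 2^{\frac{\beta}{\alpha}-1}\epsilon^{\beta/\alpha}$; letting $\epsilon\downarrow\chi(X)$ yields the upper estimate (with a factor to spare).

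For the lower bound I would first record that a bounded subset of $J_{\alpha,\beta}[0,1]$ is bounded in $H_\alpha[0,1]$, by the embedding $J_{\alpha,\beta}\hookrightarrow J_{\alpha,\infty}=H_\alpha$ recorded in Corollary \ref{cor:LInftyJBound}; hence $X$ is uniformly $\alpha$-H\"older and uniformly bounded, so by Arzel\`a--Ascoli it is relatively compact in $C[0,1]$. Consequently, for every $\eta>0$ the set $X$ admits a finite sup-norm $\eta$-net with centers $y_1,\dots,y_m$ that may be taken in $X$ itself (by total boundedness). Next I would exploit that $s\mapsto\sup_{x\in X}j_{\alpha,\beta}(x,[0,s])$ is nondecreasing, so its $\limsup$ at $0$ is a genuine decreasing limit equal to $c(X)$; thus I may fix $s_0>0$, depending only on $\epsilon$, with $\sup_{x\in X}j_{\alpha,\beta}(x,[0,s_0])\le c(X)+\epsilon$.

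The crux is then to bound $\|x-y_j\|_{\alpha,\beta}$ by splitting the defining integral at $s_0$. On $[0,s_0]$ the convexity inequality and the uniform tail bound give $j_{\alpha,\beta}(x-y_j,[0,s_0])\le 2^{\frac{\beta}{\alpha}-1}\big(j_{\alpha,\beta}(x,[0,s_0])+j_{\alpha,\beta}(y_j,[0,s_0])\big)\le 2^{\frac{\beta}{\alpha}}(c(X)+\epsilon)$, both terms being controlled because $x,y_j\in X$. On $[s_0,1]$ the weight $\sigma^{-(\beta+1)}$ is bounded by $s_0^{-(\beta+1)}$ and $\omega(x-y_j,\sigma)\le 2\|x-y_j\|_\infty\le 2\eta$, so this contribution is at most $C(s_0)\eta^{\beta/\alpha}$ for a constant $C(s_0)$ depending only on $s_0$; similarly $|(x-y_j)(0)|\le\eta$. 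Therefore $\|x-y_j\|_{\alpha,\beta}\le\eta+\big(2^{\frac{\beta}{\alpha}}(c(X)+\epsilon)+C(s_0)\eta^{\beta/\alpha}\big)^{\alpha/\beta}$. Holding $s_0$ and $\epsilon$ fixed and sending the net parameter $\eta\to0$, the right-hand side tends to $\big(2^{\frac{\beta}{\alpha}}(c(X)+\epsilon)\big)^{\alpha/\beta}=2\,(c(X)+\epsilon)^{\alpha/\beta}$, whence $\chi(X)\le 2\,(c(X)+\epsilon)^{\alpha/\beta}$; letting $\epsilon\to0$ and rearranging gives precisely $2^{-\frac{\beta}{\alpha}}\chi(X)^{\frac{\beta}{\alpha}}\le c(X)$. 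The main obstacle, and the point requiring care, is the ordering of the limits: $s_0$ must be chosen first, from $c(X)$ and the uniform tail of $X$ alone, and only afterwards may the sup-norm net be refined, since refining the net can never reduce the genuinely non-compact small-scale oscillation that $c(X)$ measures near $\sigma=0$.
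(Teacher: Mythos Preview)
The paper does not prove this proposition: it is quoted verbatim from \cite{appell2021holder}, Proposition~6.3, and no argument is supplied here. So there is nothing in the present paper to compare your proof against.

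That said, your outline is the standard and correct route to such a two-sided estimate, and it would go through. A couple of remarks are in order. First, your homogeneity observation is well taken: since $j_{\alpha,\beta}(\lambda x,[0,s])=|\lambda|^{\beta/\alpha}j_{\alpha,\beta}(x,[0,s])$ while $\chi$ is $1$-homogeneous, the displayed inequality cannot be scale-consistent as written unless $\beta=\alpha$; the form you actually prove, with $\chi(X)^{\beta/\alpha}$, is the dimensionally correct one. For the only use the paper makes of this proposition (deducing $\chi(TX)=0$ from $c(TX)=0$ in Proposition~\ref{prop:Noncompactness}), either version suffices. Second, your appeal to ``$J_{\alpha,\beta}\hookrightarrow J_{\alpha,\infty}=H_\alpha$ recorded in Corollary~\ref{cor:LInftyJBound}'' is slightly off: that corollary gives only an $L^\infty$ bound, not the H\"older embedding. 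The embedding you need is nevertheless true and elementary---for instance, from $\int_\sigma^{2\sigma}\tau^{-(\beta+1)}\omega(x,\tau)^{\beta/\alpha}\,d\tau\le j_{\alpha,\beta}(x,[0,1])$ and monotonicity of $\omega$ one gets $\omega(x,\sigma)\le C\,\sigma^\alpha\,j_{\alpha,\beta}(x,[0,1])^{\alpha/\beta}$, which yields the uniform equicontinuity required for the Arzel\`a--Ascoli step. With that fix, your lower-bound argument (split at $s_0$, control the tail uniformly over $X$, and refine the $C[0,1]$-net afterwards) is exactly the right mechanism.
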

Having these devices, we are ready to find the measure of noncompactness of a ball in $J_{\alpha,\beta}[0,1]$.
\begin{prop}\label{prop:Noncompactness}
	Assume \eqref{eqn:AssumRho} and suppose that there exists $r_0 > 0$ such that \eqref{eqn:AssumR0} holds. Then, for any $X\subseteq B_{r_0}^{\alpha,\beta}$ we have $c(T X) = 0$. 
\end{prop}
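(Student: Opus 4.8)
The plan is to dominate $j_{\alpha,\beta}(Tx,[0,s])$ by a quantity that does not depend on $x\in X$ and that vanishes as $s\to 0^+$; since $c(TX)$ is a $\limsup_{s\to 0}$ of a supremum over $x\in X$ of exactly these quantities, it will then be forced to be $0$.

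First I would fix $X\subseteq B_{r_0}^{\alpha,\beta}$, take $0<s\le 1$ and an arbitrary $x\in X$. Because $\|x\|_\infty\le\|x\|_{\alpha,\beta}\le r_0$ and $\phi$ is nondecreasing, Lemma~\ref{lem:ModulusT} yields the \emph{uniform} bound
\[
	\omega(Tx,\sigma)\le\bigl(\phi(r_0)+M_f\bigr)\bigl(K_G\rho(\sigma)+\sigma M_G\bigr),\qquad 0<\sigma\le 1,
\]
whose right-hand side is independent of $x$. Raising to the power $\beta/\alpha\ge 1$ and integrating $\sigma^{-(\beta+1)}$ against it over $[0,s]$ gives
\[
	j_{\alpha,\beta}(Tx,[0,s])\le\bigl(\phi(r_0)+M_f\bigr)^{\frac{\beta}{\alpha}}\int_0^s\sigma^{-(\beta+1)}\bigl(K_G\rho(\sigma)+\sigma M_G\bigr)^{\frac{\beta}{\alpha}}\,d\sigma=:\bigl(\phi(r_0)+M_f\bigr)^{\frac{\beta}{\alpha}}I(s),
\]
and, since the right side does not involve $x$, it also bounds $\sup_{x\in X}j_{\alpha,\beta}(Tx,[0,s])$.

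Next I would show $I(s)\to 0$ as $s\to 0^+$. The proof of Proposition~\ref{prop:TNorm} already establishes that $\sigma\mapsto\sigma^{-(\beta+1)}\bigl(K_G\rho(\sigma)+\sigma M_G\bigr)^{\beta/\alpha}$ is integrable on $[0,1]$ — one splits it via $(a+b)^{\beta/\alpha}\le 2^{\beta/\alpha-1}(a^{\beta/\alpha}+b^{\beta/\alpha})$ and uses \eqref{eqn:AssumRho} for the $\rho$-part and the power rule for the remaining term; in any case, finiteness of this integral is implicit in hypothesis~\eqref{eqn:AssumR0}. Hence, by absolute continuity of the Lebesgue integral, $I(s)=\int_0^s(\cdots)\,d\sigma\to 0$ as $s\to 0^+$. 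Combining the two displays,
\[
	c(TX)=\limsup_{s\to 0^+}\,\sup_{x\in X}j_{\alpha,\beta}(Tx,[0,s])\le\bigl(\phi(r_0)+M_f\bigr)^{\frac{\beta}{\alpha}}\lim_{s\to 0^+}I(s)=0,
\]
and since $c$ is nonnegative this gives $c(TX)=0$.

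I do not anticipate a real obstacle: the claim is in essence a uniform-integrability statement. The only step that needs slight care is obtaining the bound on $\omega(Tx,\sigma)$ \emph{uniformly} in $x\in X$ — which is exactly where the hypotheses $X\subseteq B_{r_0}^{\alpha,\beta}$ and monotonicity of $\phi$ are used — and then noting that the tail of a convergent integral vanishes, i.e.\ absolute continuity of the Lebesgue integral, to kill $I(s)$ as $s\to 0^+$.
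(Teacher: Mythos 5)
Your proposal is correct and follows essentially the same route as the paper: bound $\omega(Tx,\sigma)$ via Lemma~\ref{lem:ModulusT}, make the bound uniform over the ball using $\|x\|_{\alpha,\beta}\le r_0$ and the monotonicity of $\phi$, and observe that the tail integral $I(s)$ vanishes as $s\to 0^+$ because the full integral converges under \eqref{eqn:AssumRho}. The only cosmetic differences are that the paper splits $I(s)$ with $(a+b)^{\beta/\alpha}\le 2^{\beta/\alpha-1}(a^{\beta/\alpha}+b^{\beta/\alpha})$ and computes the power-term explicitly where you invoke absolute continuity of the Lebesgue integral, and your version states the uniformity in $x\in X$ slightly more cleanly.
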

\begin{proof}
	Let $x\in B_{r_0}^{\alpha,\beta}$. By Lemma \ref{lem:ModulusT} and the same estimates as in Proposition \ref{prop:TNorm} we have 
	\begin{equation}
		\begin{split}
			j_{\alpha,\beta}&(Tx, [0,1]) 
			\leq \left(\max\{K_G , M_G\}\left(\phi(\|x\|_{\alpha,\beta}) + M_f\right)\right)^\frac{\beta}{\alpha} \int_0^s \sigma^{-(\beta+1)} \left(\rho(\sigma) + \sigma \right)^\frac{\beta}{\alpha} d\sigma \\
			&\leq 2^{\frac{\beta}{\alpha}-1}\left(\max\{K_G , M_G\}\left(\phi(\|x\|_{\alpha,\beta}) + M_f\right)\right)^\frac{\beta}{\alpha}\left(\int_0^s \sigma^{-(\beta+1)} \rho(\sigma)^\frac{\beta}{\alpha}d\sigma + \int_0^s\sigma^{-(\beta+1)}\sigma^\frac{\beta}{\alpha} d\sigma\right).
		\end{split}
	\end{equation}
	By assumption \eqref{eqn:AssumRho} the first integral is convergent, hence vanishing as $s\rightarrow 0^+$. On the other hand, the second one can be computed explicitly
	\begin{equation}
		\int_0^s\sigma^{-(\beta+1)}\sigma^\frac{\beta}{\alpha} d\sigma = \frac{s^{\beta\left(\frac{1}{\alpha}-1\right)}}{\beta\left(\frac{1}{\alpha}-1\right)} \rightarrow 0 \quad \text{as} \quad s\rightarrow 0^+.
	\end{equation}
	Therefore,
	\begin{equation}
		\begin{split}
			c(T B_{r_0}^{\alpha,\beta}) 
			&= \limsup\limits_{s\rightarrow 0}\left(\sup_{x\in B_{r_0}^{\alpha,\beta}} j_{\alpha,\beta}(Tx, [0,s])\right) \leq 2^{\frac{\beta}{\alpha}-1}\left(\max\{K_G, M_G\}\left(\phi(\|x\|_{\alpha,\beta}) + M_f\right)\right)^\frac{\beta}{\alpha} \\
			&\times\left(\int_0^s \sigma^{-(\beta+1)} \rho(\sigma)^\frac{\beta}{\alpha}d\sigma + \frac{s^{\beta\left(\frac{1}{\alpha}-1\right)}}{\beta\left(\frac{1}{\alpha}-1\right)}\right) \rightarrow 0 \quad \text{as} \quad s\rightarrow 0^+.
		\end{split}
	\end{equation}
	Therefore, $c(TX) = 0$ for any $X \subseteq B_{r_0}^{\alpha,\beta}$. The proof is complete. 
\end{proof}
We can now proceed to the main result regarding the existence.
\begin{thm}\label{thm:Existence}
	Assume \eqref{eqn:AssumRho} and suppose that there exists $r_0>0$ satisfying \eqref{eqn:AssumR0}. Then, the integral equation \eqref{eqn:MainEq} has a solution in $J_{\alpha,\beta}[0,1]$ with $0<\alpha<1$ and $\alpha\leq \beta<\infty$. 
\end{thm}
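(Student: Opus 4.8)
The plan is to apply a fixed-point theorem for condensing operators — the Darbo–Sadovskii fixed point theorem — to the operator $T$ defined in \eqref{eqn:OpeartorT} on the closed ball $B_{r_0}^{\alpha,\beta}$. The ingredients needed are: (i) $B_{r_0}^{\alpha,\beta}$ is a nonempty, bounded, closed, convex subset of the Banach space $J_{\alpha,\beta}[0,1]$; (ii) $T$ maps $B_{r_0}^{\alpha,\beta}$ into itself; (iii) $T$ is continuous on $B_{r_0}^{\alpha,\beta}$; and (iv) $T$ is condensing with respect to the Hausdorff measure of noncompactness $\chi$, i.e. $\chi(TX) \le k\,\chi(X)$ for some $k<1$ and all $X\subseteq B_{r_0}^{\alpha,\beta}$ (or, even stronger, $\chi(TX)=0$).

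First I would dispatch (i): convexity, closedness and boundedness of the ball are immediate from the norm structure of $J_{\alpha,\beta}[0,1]$, and nonemptiness is clear since $0\in B_{r_0}^{\alpha,\beta}$ because $M_f$ and $r_0$ satisfy \eqref{eqn:AssumR0} with $\phi(0)\ge 0$. Point (ii) is exactly Remark \ref{rem:R0}, which gives $T(B_{r_0}^{\alpha,\beta})\subseteq B_{r_0}^{\alpha,\beta}$ under the assumed \eqref{eqn:AssumR0}. For point (iv) I would invoke Proposition \ref{prop:Noncompactness}: for any $X\subseteq B_{r_0}^{\alpha,\beta}$ we have $c(TX)=0$, and by Proposition~\cite{appell2021holder} (the two-sided estimate $2^{-\beta/\alpha}\chi(X)\le c(X)\le 2^{\beta/\alpha}\chi(X)$) this forces $\chi(TX)=0$. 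So $T$ is not merely condensing but completely continuous on the ball — the image of any subset is relatively compact — which is the hypothesis of Schauder's fixed point theorem once continuity is established.

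The step that requires the most care is (iii), the continuity of $T$ in the $\|\cdot\|_{\alpha,\beta}$-norm. Here I would take a sequence $x_n\to x$ in $J_{\alpha,\beta}[0,1]$, hence $\|x_n-x\|_\infty\to 0$, and estimate $\|Tx_n-Tx\|_{\alpha,\beta}$. The value at $0$ vanishes, so I need to control $j_{\alpha,\beta}(Tx_n-Tx,[0,1])$. Running the argument of Lemma~\ref{lem:ModulusT} on the difference $Tx_n-Tx$ and using \eqref{eqn:AssumF} in the form $|f(s,x_n(s))-f(s,x(s))|\le \phi(\|x_n-x\|_\infty)$, one obtains $\omega(Tx_n-Tx,\sigma)\le \phi(\|x_n-x\|_\infty)\,(K_G\rho(\sigma)+\sigma M_G)$, and therefore
\begin{equation}
\|Tx_n-Tx\|_{\alpha,\beta}\le \phi(\|x_n-x\|_\infty)\left(\int_0^1\sigma^{-(\beta+1)}\left(K_G\rho(\sigma)+\sigma M_G\right)^{\frac{\beta}{\alpha}}d\sigma\right)^{\frac{\alpha}{\beta}},
\end{equation}
which tends to $0$ provided $\phi$ is continuous at $0$ with $\phi(0)=0$; if $\phi$ is only assumed nondecreasing, one should note that $\phi$ has a right limit at $0$ and, in the capillary-type applications, $\phi(u)\to 0$ as $u\to 0^+$, so continuity of $T$ holds. (If one wants to avoid any extra assumption on $\phi$, one can instead verify the weaker Darbo hypothesis directly, but the cleanest route is to remark that the standing hypotheses on $f$ in \eqref{eqn:AssumF} already entail a modulus-of-continuity–type control, giving continuity of the Nemytskii map $s\mapsto f(s,x(s))$ in the relevant norm.)

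Finally, with (i)–(iv) in hand I would conclude by the Darbo fixed point theorem (or Schauder, since $\chi(TX)=0$): $T$ has a fixed point $x^*\in B_{r_0}^{\alpha,\beta}\subseteq J_{\alpha,\beta}[0,1]$, and by definition of $T$ this $x^*$ solves \eqref{eqn:MainEq}. The main obstacle, as indicated, is the norm-continuity of $T$ in the integral-Hölder norm: the $L^\infty$-to-$L^\infty$ continuity of the Nemytskii operator is standard, but transferring it to the $\|\cdot\|_{\alpha,\beta}$ scale requires the modulus estimate of Lemma~\ref{lem:ModulusT} applied to differences, and hence a mild regularity of $\phi$ at the origin, which should be made explicit or absorbed into hypothesis \eqref{eqn:AssumF}.
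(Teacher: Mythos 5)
Your proposal follows essentially the same route as the paper: the paper also verifies that $T$ maps $B_{r_0}^{\alpha,\beta}$ into itself via Remark \ref{rem:R0}, obtains relative compactness of the image from Proposition \ref{prop:Noncompactness} and the two-sided estimate relating $c$ and $\chi$, proves norm-continuity of $T$ by running the Lemma \ref{lem:ModulusT} estimate on the difference $Tx_n-Tx$, and concludes with Schauder's fixed point theorem. Your observation that the continuity step needs $\phi$ to be continuous at $0$ with $\phi(0^+)=0$ is well taken --- the paper invokes exactly this property even though \eqref{eqn:AssumF} only states that $\phi$ is nondecreasing --- so the proposal is correct and, if anything, slightly more careful on that point.
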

\begin{proof}
	The proofs relies on invoking Schauder's fixed point theorem for the operator $T$ defined in \eqref{eqn:OpeartorT}. By our assumption and Remark \ref{rem:R0} we know that $T$ maps a bounded set $B_{r_0}^{\alpha,\beta}$ into itself. Moreover, Proposition \ref{prop:Noncompactness} states that $\chi(T B_{r_0}^{\alpha,\beta}) = 0$, which implies that $T B_{r_0}^{\alpha,\beta}$ is relatively compact in $J_{\alpha,\beta}[0,1]$. We are left to show that $T: B_{r_0}^{\alpha,\beta} \mapsto B_{r_0}^{\alpha,\beta}$ is continuous. To this end, take a sequence $\{x_n\}_n \subseteq B_{r_0}^{\alpha,\beta}$ convergent to some $x \in \overline{B_{r_0}^{\alpha,\beta}}$ in the norm $\|\cdot\|_{\alpha,\beta}$. We will show that $Tx_n \rightarrow Tx$ in $J_{\alpha,\beta}[0,1]$. Let $t,\tau \in [0,1]$ with $\tau < t$ and $|t-\tau|\leq \sigma$. By the same estimates as in the proof of Lemma \ref{lem:ModulusT} we obtain
	\begin{equation}
		\begin{split}
			|(Tx_n)(t) - (Tx)(t) &- \left((Tx_n)(\tau) - (Tx)(\tau)\right)| \leq \int_0^\tau |G(t,s)-G(\tau,s)| |f(s,x_n(s)) - f(s, x(s))| ds\\
			&+ \int_\tau^t |G(t,s)| |f(s,x_n(s)) - f(s, x(s))| =: I_1 + I_2.
		\end{split}
	\end{equation}
	Taking into account the regularity of $G$ and $f$ as in \eqref{eqn:AssumG} and \eqref{eqn:AssumF} we obtain a bound for the first integral
	\begin{equation}
		I_1 \leq K_G \int_0^\tau \rho(|t-s|) \phi(\|x_n-x\|_\infty) ds \leq K_G \rho(\sigma) \phi(\|x_n-x\|_{\alpha,\beta}) \tau \leq K_G \rho(\sigma) \phi(\|x_n-x\|_{\alpha,\beta})
	\end{equation} 
	Similarly, the second integral can be estimated as follows 
	\begin{equation}
		\begin{split}
			I_2 
			&\leq \int_\tau^t |G(t,s)| \phi(\|x_n-x\|_{\alpha,\beta}) ds \leq K_G \, \phi(\|x_n-x\|_{\alpha,\beta}) (t-\tau) \\
			&\leq M_G \phi(\|x_n-x\|_{\alpha,\beta}) \sigma.
		\end{split}
	\end{equation}
	Therefore, by the definition of the modulus of continuity, we have
	\begin{equation}
		\omega(Tx_n - Tx, \sigma) \leq \phi(\|x_n-x\|_{\alpha,\beta}) \left(K_G\rho(\sigma) + M_G\sigma\right).
	\end{equation}
	Now, plugging the above into the definition of the $\|\cdot\|_{\alpha,\beta}$ norm we obtain
	\begin{equation}
		\|Tx_n - Tx\|_{\alpha,\beta} \leq \phi(\|x_n-x\|_{\alpha,\beta}) \left(\int_0^1 \sigma^{-(\beta+1)}\left(K_G\rho(\sigma) + M_G\sigma\right)^\frac{\beta}{\alpha}d\sigma \right)^\frac{\alpha}{\beta}.
	\end{equation}
	Using the H\"older inequality $(a+b)^\gamma \leq 2^{\gamma-1} (a^\gamma+b^\gamma)$ with $\gamma\geq 1$ as in the proofs of the above results, we see that our assumption \eqref{eqn:AssumRho} implies that
	\begin{equation}
		\|Tx_n - Tx\|_{\alpha,\beta} \leq C_{G,\rho,\alpha,\beta} \phi(\|x_n-x\|_{\alpha,\beta}),
	\end{equation}
	for some constant $C_{G,\rho,\alpha,\beta} > 0$. Therefore, passing to the limit with $n\rightarrow \infty$ and using the continuity of $\phi$ at zero, we conclude that $Tx_n \rightarrow Tx$ in $J_{\alpha,\beta}[0,1]$. Hence, $T$ is continuous. All hypotheses of Schauder fixed point theorem are then satisfied, therefore, there exists a fixed point of the operator $T$. The integral equation \eqref{eqn:MainEq} then has a solution.
\end{proof}

\paragraph{Example.} For the capillary rise equation \eqref{eqn:CapillaryRiseEq} we have
\begin{equation}
	G(t,s) = 1- e^{-(t-s)}, \quad f(s, x) = 1 - \sqrt{2x}.
\end{equation}
Therefore, since $G$ is Lipschitz while $f$ is $1/2$-H\"older continuous, we have
\begin{equation}
	M_G = 1, \quad K_G = 1, \quad M_f = 1, \quad \rho(t) = t, \quad \phi(t) = \sqrt{2t}. 
\end{equation}
Hence, after computing an elementary integral, the condition \eqref{eqn:AssumR0} becomes
\begin{equation}
	\frac{1}{\sqrt{2r_0}} + \frac{1}{r_0} <\frac{1}{2} \left(\beta\left(\frac{1}{\alpha}-1\right)\right)^\frac{\alpha}{\beta}. 
\end{equation}
Since the left-hand side decreases in $r_0$, for any fixed $0<\alpha\leq \beta$ there exists a unique $r_0 > 0$ such that the above holds (it can explicitly be found by solving a quadratic). \qed

\section{Collocation numerical methods}
In this section, our objective is to construct an efficient numerical method to solve \eqref{eqn:MainEq}. Due to its versatility, our choice is the colocation scheme, which we split into two families. The first is the normal \textit{collocation method} in which we approximate the solution of \eqref{eqn:MainEq} using a piecewise linear polynomial. This approach is very well suited to solve equations with minimal regularity such as those belonging to the space $J_{\alpha, \beta}[0,1]$ as in our case. The advantage is that we do not have to deal with the grid-wise pointwise values of the approximation, but rather we insist that the integral equation is satisfied at the previously chosen set of nodes. However, the difficulty is that we have to be able to provide a reliable estimate for the linear interpolation error in the respective function space. Note that in the case of low regularity, it is not desirable to consider higher-order than linear collocations. Since then, the error constant can become very large, leaving the order of convergence unchanged. On the other hand, when sufficient regularity is provided, it is very useful to implement a spectral collocation method, since it is well known that it can achieve exponential accuracy provided the solution is regular enough \cite{canuto2006spectral}. Having that in mind, in what follows, we present these two approaches to devise an efficient and fast numerical scheme to be used in various situations.

Introduce a grid of nodes that partition the interval $[0,1]$ 
\begin{equation}\label{eqn:Grid}
	0 = t_0 < t_1 < ... < t_N = 1,
\end{equation}
where $N \in \mathbb{N}$ is the total number of subintervals in $[0,1]$. For example, the important uniform grid is defined by 
\begin{equation}\label{eqn:UniGrid}
	t_n = n h, \quad h = \frac{1}{N}, \quad 0 \leq n \leq N.
\end{equation}
For the collocation scheme, we construct an approximation of $x$ requiring that it satisfies the equation and the initial condition at the nodes. Before we consider specific examples of the collocation schemes, we discuss linear interpolation in our integral H\"older space. 

\subsection{Linear interpolation in the $J_{\alpha,\beta}$ space}
In this section, we are working with the uniform grid \eqref{eqn:UniGrid}, but the generalization to a general discretization can be made very easily. Define the interpolation operator (which is a projection on the space of piecewise linear polynomials)
\begin{equation}\label{eqn:LinearInterpolation}
	I_h x(t) = \frac{1}{h} \left(x(t_{n+1}) (t- t_n) + x(t_n) (t_{n+1}- t)\right), \quad t \in [t_{n}, t_{n+1}], \quad 0\leq n \leq N-1, \quad x \in C[0,1].
\end{equation}
It is well known that for $C^2[0,1]$ smooth functions we have $\|I_h x - x\|_\infty \leq C h^2 \|x\|_{C^2[0,1]}$, where $C>0$ is an explicitly known function. Moreover, in \cite{caballero2025functional} we have shown that for functions of lower regularity, belonging to $H^{k,\gamma}[0,1]$, that is, having $k=0,1$-th derivative $\gamma$-H\"older continuous with $0<\gamma<1$, we have
\begin{equation}
	\|I_h x - x\|_\infty \leq C h^{k+\gamma} \|x\|_{H^{k,\gamma}[0,1]}.
\end{equation}
The proof of convergence of the collocation scheme is based on a proper error decomposition such that the interpolation error estimate can be used. Therefore, we have to find its supremum bound for solutions belonging to the space $J_{\alpha,\beta}$. The following lemma is crucial for this result.
\begin{lem}
	Let $x \in C[a,b]$ with $a<b$. Then, for any $\gamma, \delta >1$ we have
	\begin{equation}\label{eqn:FundamentalEstimate}
		\|x\|_\infty \leq C_{\gamma, \delta} (b-a)^\frac{\gamma-1}{\delta} \left(\int_{a}^{b} \sigma^{-\gamma} \omega(x, \sigma)^\delta d\sigma\right)^\frac{1}{\delta},
	\end{equation}
	where
	\begin{equation}
		C_{\gamma,\delta} = 
		\left(\frac{\gamma-1}{2^{\gamma-1}-1}\right)^\frac{1}{\delta} \left(\frac{1}{1-2^{-\frac{\gamma-1}{\delta-1}}}\right)^\frac{\delta-1}{\delta}.
	\end{equation}
\end{lem}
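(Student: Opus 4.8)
The plan is to bound $\|x\|_\infty$ by a dyadic decomposition of the interval $[a,b]$ localized near the point where $x$ attains (or nearly attains) its maximum absolute value. First I would fix a point $t^\ast \in [a,b]$ with $|x(t^\ast)|$ within, say, a factor $2$ of $\|x\|_\infty$ (or exactly equal to it if the supremum is attained), and set $L = b-a$. For each $k \ge 0$ consider the scale $\sigma_k = L\, 2^{-k}$ and a point $t_k$ at distance $\sigma_k$ from $t^\ast$ inside $[a,b]$ (such a point exists since $\sigma_k \le L$). Then $|x(t_k) - x(t_{k+1})| \le \omega(x, 2\sigma_{k+1}) \le 2^{\gamma/\delta}\,\omega(x,\sigma_{k+1})$ using monotonicity and a crude doubling bound for $\omega$, but in fact it is cleaner to estimate $|x(t_k)-t^\ast$-value differences directly: $|x(t^\ast) - x(t_k)| \le \omega(x,\sigma_k)$. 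The key telescoping observation is that $x(t_k) \to x(t^\ast)$ as $k \to \infty$ by continuity, so
\begin{equation}
	|x(t^\ast)| \le |x(t_0)| + \sum_{k=0}^\infty |x(t_k) - x(t_{k+1})|,
\end{equation}
and since $x(t_0)$ is a value at distance $L$ from $t^\ast$ we can absorb it by one more comparison, or simply start the telescoping from an endpoint. Each increment is controlled by $\omega(x,\sigma_k)$.

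Next I would convert the sum $\sum_k \omega(x,\sigma_k)$ into the integral appearing on the right-hand side. On the dyadic annulus $A_k = \{\sigma : \sigma_{k+1} \le \sigma \le \sigma_k\}$ we have $\omega(x,\sigma) \ge \omega(x,\sigma_{k+1})$ by monotonicity and $\sigma^{-\gamma} \ge \sigma_k^{-\gamma}$, so
\begin{equation}
	\int_{A_k} \sigma^{-\gamma}\omega(x,\sigma)^\delta\,d\sigma \ge \omega(x,\sigma_{k+1})^\delta \sigma_k^{-\gamma} \cdot (\sigma_k - \sigma_{k+1}) = \omega(x,\sigma_{k+1})^\delta\, \sigma_k^{-\gamma}\cdot \tfrac{1}{2}\sigma_k = \tfrac12 \sigma_k^{1-\gamma}\,\omega(x,\sigma_{k+1})^\delta.
\end{equation}
Hence $\omega(x,\sigma_{k+1}) \le \left(2\,\sigma_k^{\gamma-1}\right)^{1/\delta}\left(\int_{A_k}\sigma^{-\gamma}\omega(x,\sigma)^\delta d\sigma\right)^{1/\delta}$. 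Summing over $k$, the factors $\sigma_k^{(\gamma-1)/\delta} = L^{(\gamma-1)/\delta} 2^{-k(\gamma-1)/\delta}$ form a geometric sequence, while the integrals over the disjoint annuli $A_k$ must be combined using Hölder's inequality for series (with exponents $\delta$ and $\delta' = \delta/(\delta-1)$): $\sum_k a_k b_k \le (\sum_k a_k^{\delta'})^{1/\delta'}(\sum_k b_k^\delta)^{1/\delta}$ where $a_k = 2^{-k(\gamma-1)/\delta}$ and $b_k^\delta = \int_{A_k}$. The geometric series $\sum_k a_k^{\delta'} = \sum_k 2^{-k(\gamma-1)/(\delta-1)}$ converges precisely because $\gamma > 1$, giving the factor $\left(1 - 2^{-(\gamma-1)/(\delta-1)}\right)^{-(\delta-1)/\delta}$, and $\sum_k b_k^\delta = \sum_k \int_{A_k} \le \int_a^b \sigma^{-\gamma}\omega(x,\sigma)^\delta d\sigma$ since the annuli are essentially disjoint and contained in $[a,b]$ (one has to be a little careful: the first annulus should be $[L/2, L]$ and the union of all of them is $(0,L]$, so one truncates at $\sigma = L$; if $b - a = L$ this is exactly $(0,b-a]$, which sits inside any reference interval where $\omega$ is defined — here $[a,b]$, after the obvious affine identification). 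Tracking the constants: the factor $2^{1/\delta}$ from each annular estimate combines with the geometric sum $\sum_k 2^{-k(\gamma-1)} $ implicit in normalizing — actually, re-deriving carefully, one gets the stated $C_{\gamma,\delta} = \left(\frac{\gamma-1}{2^{\gamma-1}-1}\right)^{1/\delta}\left(1-2^{-(\gamma-1)/(\delta-1)}\right)^{-(\delta-1)/\delta}$, where the first parenthesis comes from optimizing the annulus widths / a slightly sharper lower bound for $\int_{A_k}\sigma^{-\gamma}d\sigma = \frac{\sigma_{k+1}^{1-\gamma}-\sigma_k^{1-\gamma}}{\gamma-1} = \frac{\sigma_k^{1-\gamma}(2^{\gamma-1}-1)}{\gamma-1}$ rather than the crude $\tfrac12\sigma_k^{1-\gamma}$ I used above.

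The main obstacle I anticipate is bookkeeping the constant $C_{\gamma,\delta}$ exactly as stated, since a naive decomposition gives a larger constant; the sharp value requires using the exact integral $\int_{\sigma_{k+1}}^{\sigma_k}\sigma^{-\gamma}d\sigma = \frac{\sigma_k^{1-\gamma}(2^{\gamma-1}-1)}{\gamma-1}$ in the lower bound (this yields the $\frac{\gamma-1}{2^{\gamma-1}-1}$ factor) and then applying Hölder's inequality for series with the optimal pairing so that the geometric series that survives is exactly $\sum_{k\ge 0} 2^{-k(\gamma-1)/(\delta-1)} = \left(1-2^{-(\gamma-1)/(\delta-1)}\right)^{-1}$. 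A secondary subtlety is handling the boundary term $|x(t_0)|$ in the telescoping: the cleanest fix is to choose $t_0$ to be an endpoint of $[a,b]$ and let the chain of points $t_k$ march monotonically toward $t^\ast$, so that $|x(t^\ast)| \le |x(t_0)| + \sum_{k\ge 0}\omega(x,\sigma_k)$ with $|x(t_0)| = \lim$-type control — but since we want a bound purely in terms of the integral (no additive $|x(t_0)|$), one instead telescopes from $t^\ast$ outward to scale $L$ and observes that $|x| $ at scale $L$ is itself $\le |x(t^\ast)| + \omega(x,L)$, which is circular unless one is careful; the correct route is the inward telescoping $|x(t^\ast)| = \lim_{k} |x(t_k)|$ with $t_k \to t^\ast$ and $|x(t^\ast)| \le \sum_{k\ge 0}|x(t_k)-x(t_{k+1})|$ after choosing $t_k$ so that $x(t_{k})$ at the coarsest scale can be taken $0$... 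In fact for a genuine $L^\infty$ bound one should pick $t^\ast$ where $|x|$ is maximal and chain toward a point where $|x|$ is small is not guaranteed; the honest statement is that the telescoping gives $\|x\|_\infty \le |x(\text{some fixed point})| + (\text{integral term})$, and the stated inequality must therefore implicitly use that $\omega$ alone controls oscillation, with the "base point" value absorbed — I would resolve this by noting that replacing $x$ by $x - x(t^\ast_0)$ for a fixed $t^\ast_0$ does not change $\omega(x,\cdot)$, so WLOG the chain terminates at a zero of $x$; more precisely, take $t^\ast$ realizing $\|x\|_\infty$ and $t_\infty$ realizing $\min|x|$; if that minimum is $0$ we are done, otherwise $x$ has constant sign and the inequality still follows by the same chain since $|x(t^\ast)| - |x(t_\infty)| \le |x(t^\ast) - x(t_\infty)| \le \sum \omega$... this case analysis is the genuinely fiddly part, but it is elementary once organized correctly.
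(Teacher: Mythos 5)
Your decomposition is exactly the paper's: dyadic scales $\sigma_k=(b-a)2^{-k}$, a telescoping chain of point values each controlled by $\omega(x,\sigma_k)$, the exact annulus integral $\int_{\sigma_{k+1}}^{\sigma_k}\sigma^{-\gamma}\,d\sigma=\frac{2^{\gamma-1}-1}{\gamma-1}\,\sigma_k^{1-\gamma}$ as the lower bound, and H\"older's inequality for series with exponents $\delta$ and $\delta/(\delta-1)$. This reproduces both factors of $C_{\gamma,\delta}$ precisely as you predict, so the quantitative core of your proposal matches the paper's proof step for step.

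The one genuine loose end is the one you flag yourself: the base point of the telescoping chain. Your proposed repair --- chain from a maximizer $t^\ast$ of $|x|$ to a minimizer $t_\infty$ and split into cases --- does not close the gap: it only yields $\|x\|_\infty-\min_{[a,b]}|x|\le(\text{integral term})$, and nothing in the hypotheses bounds $\min|x|$. In fact no argument can close it, because the inequality as stated is false for nonzero constant functions: the right-hand side vanishes while the left-hand side does not. The paper's own proof carries the same silent assumption --- it writes $x(t)=\sum_{k\ge1}\bigl(x(t_k)-x(t_{k-1})\bigr)$ for the partial sums $t_k$ of the binary expansion of $t$, which requires $x(t_0)=x(0)=0$. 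What is actually proved, by you and by the paper alike, is the oscillation bound $|x(t)-x(a)|\le C_{\gamma,\delta}\,(t-a)^{(\gamma-1)/\delta}\bigl(\int\sigma^{-\gamma}\omega(x,\sigma)^\delta d\sigma\bigr)^{1/\delta}$, and that is all the later applications need (the norm $\|\cdot\|_{\alpha,\beta}$ carries the extra $|x(0)|$ term, and the interpolation error vanishes at the grid points). Your instinct to subtract a constant, which leaves $\omega(x,\cdot)$ unchanged, is the correct fix; the clean statement is for $x-x(a)$ rather than for $x$ itself, and you should state it that way rather than attempting the $\min|x|$ case analysis.
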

\begin{proof}
	Using a simple translation argument, we can assume that $x=x(t)$ is defined on $[0,T]$ with $T:=b-a$. If the integral in \eqref{eqn:FundamentalEstimate} is infinite, then the inequality is trivially satisfied. Assume otherwise and fix $t\in [0, T]$. We must relate the point value of $x(t)$ to its integrated value. To this end, we will use the dyadic decomposition technique used in harmonic analysis and constructive approximation theory. It is also closely related to Besov spaces (\cite{bahouri2011fourier}, Chapter 2 or \cite{devore1993constructive}, Chapter 2). Any point in $[0,T]$ can be uniquely expressed in the binary system as
	\begin{equation}
		t = T\sum_{k=0}^\infty \epsilon_k 2^{-k} =: \sum_{k=0}^\infty t_k \quad \text{where} \quad \epsilon_k \in \left\{0,1\right\}.
	\end{equation}
	Therefore, we can express the value of $x(t)$ as a telescoping series on exponentially decreasing intervals
	\begin{equation}
		x(t) = \sum_{k=1}^\infty (x(t_k) - x({t_{k-1}})).
	\end{equation}
	Hence, by the definition of the modulus of continuity, we have
	\begin{equation}\label{eqn:xEst}
		|x(t)| \leq \sum_{k=1}^\infty \omega(x, t_k - t_{k-1}) \leq \sum_{k=1}^\infty \omega(x, (b-a)2^{-k}).
	\end{equation}
	Now, since the modulus of continuity is nondecreasing, we immediately have for $\gamma \neq 1$
	\begin{equation}
		\begin{split}
			\int_{t2^{-k}}^{t2^{-k+1}} &\sigma^{-\gamma} \omega(x, \sigma)^\delta d\sigma \geq \omega(x, t2^{-k})^\delta \int_{t2^{-k}}^{t2^{-k+1}} \sigma^{-\gamma} d\sigma \\
			&= t^{1-\gamma}2^{(1-k)(1-\gamma)}\frac{2^{\gamma-1}-1}{\gamma-1} \omega(x, t2^{-k})^\delta,
		\end{split}
	\end{equation}
	or
	\begin{equation}
		\omega(x, t2^{-k}) \leq \left(t^{\gamma-1} 2^{(1-k)(\gamma-1)} \frac{\gamma-1}{2^{\gamma-1}-1} \int_{t2^{-k}}^{t2^{-k+1}} \sigma^{-\gamma} \omega(x, \sigma)^\delta d\sigma\right)^\frac{1}{\delta}.
	\end{equation}
	Therefore, from \eqref{eqn:xEst} we obtain
	\begin{equation}
		|x(t)| \leq t^\frac{\gamma-1}{\delta} \left(\frac{\gamma-1}{2^{\gamma-1}-1}\right)^\frac{1}{\delta} \sum_{k=1}^\infty 2^{(1-k)\frac{\gamma-1}{\delta}}\left(\int_{t2^{-k}}^{t2^{-k+1}} \sigma^{-\gamma} \omega(x, \sigma)^\delta d\sigma\right)^\frac{1}{\delta}.
	\end{equation}
	Or, using the H\"older inequality with $p = \delta/(\delta-1)$ and $q = \delta$, we further have
	\begin{equation}
		|x(t)| \leq t^\frac{\gamma-1}{\delta} \left(\frac{\gamma-1}{2^{\gamma-1}-1}\right)^\frac{1}{\delta} \left(\sum_{k=1}^\infty 2^{(1-k) \frac{\gamma-1}{\delta-1}}\right)^\frac{\delta-1}{\delta} \left(\sum_{k=1}^\infty \int_{t2^{-k}}^{t2^{-k+1}} \sigma^{-\gamma} \omega(x, \sigma)^\delta d\sigma\right)^\frac{1}{\delta}.
	\end{equation}
	By explicitly summing both series, we can simplify
	\begin{equation}
		|x(t)| \leq t^\frac{\gamma-1}{\delta} \left(\frac{\gamma-1}{2^{\gamma-1}-1}\right)^\frac{1}{\delta} \left(\frac{1}{1-2^{-\frac{\gamma-1}{\delta-1}}}\right)^\frac{\delta-1}{\delta} \left(\int_{0}^{t} \sigma^{-\gamma} \omega(x, \sigma)^\delta d\sigma\right)^\frac{1}{\delta}.
	\end{equation}
	Taking the supremum over $t\in [0,T]$ finishes the proof.
\end{proof}
Immediately, taking $\gamma = \beta + 1$ and $\delta = \beta/\alpha$ for $0<\alpha \leq \beta$ we can see that the following result is true. 
\begin{cor}\label{cor:LInftyJBound}
	Suppose that $x \in J_{\alpha,\beta}[0,T]$. Then,
	\begin{equation}\label{eqn:LInftyJBound}
		\|x\|_\infty \leq \left(\frac{\beta}{2^{\beta}-1}\right)^\frac{\alpha}{\beta} \left(1-2^{-\frac{\beta}{\frac{\beta}{\alpha}-1}}\right)^{-\frac{\beta - \alpha}{\beta}}T^\alpha \|x\|_{\alpha,\beta}.
	\end{equation}
	Moreover, for $\beta\rightarrow\infty$ we obtain
	\begin{equation}\label{eqn:LInftyJBoundInfty}
		\|x\|_\infty \leq \frac{T^\alpha}{2^{\alpha}-1} \sup_{0<\sigma\leq T}\frac{\omega(x,\sigma)}{\sigma^\alpha}.
	\end{equation}
\end{cor}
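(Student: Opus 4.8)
The plan is to obtain Corollary~\ref{cor:LInftyJBound} as a direct specialization of the preceding Lemma, since the Corollary is stated precisely as the instance $\gamma=\beta+1$, $\delta=\beta/\alpha$ of the Fundamental Estimate~\eqref{eqn:FundamentalEstimate} on the interval $[a,b]=[0,T]$. First I would check that the hypotheses of the Lemma are met under the standing assumption $0<\alpha\le 1$ and $\alpha\le\beta<\infty$: we need $\gamma>1$ and $\delta>1$. Here $\gamma=\beta+1>1$ is automatic, and $\delta=\beta/\alpha\ge 1$; the strict inequality $\delta>1$ holds as long as $\alpha<\beta$ (the boundary case $\alpha=\beta$ would make $\delta=1$ and should be excluded or handled by the $\beta=\infty$-type argument — I would note this mild caveat). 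With $x\in J_{\alpha,\beta}[0,T]$ we have by definition $j_{\alpha,\beta}(x,[0,T])=\int_0^T\sigma^{-(\beta+1)}\omega(x,\sigma)^{\beta/\alpha}\,d\sigma<\infty$, so the integral appearing in~\eqref{eqn:FundamentalEstimate} is exactly $j_{\alpha,\beta}(x,[0,T])$ and is finite.

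Next I would substitute $\gamma=\beta+1$ and $\delta=\beta/\alpha$ into the constant $C_{\gamma,\delta}$ and into the exponents. The prefactor becomes $(b-a)^{(\gamma-1)/\delta}=T^{\beta/(\beta/\alpha)}=T^{\alpha}$. The first factor of $C_{\gamma,\delta}$ is $\left(\tfrac{\gamma-1}{2^{\gamma-1}-1}\right)^{1/\delta}=\left(\tfrac{\beta}{2^{\beta}-1}\right)^{\alpha/\beta}$. The second factor is $\left(\tfrac{1}{1-2^{-(\gamma-1)/(\delta-1)}}\right)^{(\delta-1)/\delta}$; here $\tfrac{\gamma-1}{\delta-1}=\tfrac{\beta}{\beta/\alpha-1}$ and $\tfrac{\delta-1}{\delta}=\tfrac{\beta/\alpha-1}{\beta/\alpha}=\tfrac{\beta-\alpha}{\beta}$, so this factor equals $\left(1-2^{-\beta/(\beta/\alpha-1)}\right)^{-(\beta-\alpha)/\beta}$. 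Finally $\left(\int_0^T\sigma^{-(\beta+1)}\omega(x,\sigma)^{\beta/\alpha}d\sigma\right)^{1/\delta}=j_{\alpha,\beta}(x,[0,T])^{\alpha/\beta}\le\|x\|_{\alpha,\beta}$, using $\|x\|_{\alpha,\beta}=|x(0)|+j_{\alpha,\beta}(x,[0,T])^{\alpha/\beta}\ge j_{\alpha,\beta}(x,[0,T])^{\alpha/\beta}$. Assembling these pieces yields exactly~\eqref{eqn:LInftyJBound}.

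For the limiting statement~\eqref{eqn:LInftyJBoundInfty}, I would pass to $\beta\to\infty$ with $\alpha$ fixed. One route is to redo the dyadic estimate of the Lemma directly with the supremum replacing the integral: from $|x(t)|\le\sum_{k\ge1}\omega(x,(b-a)2^{-k})$ and $\omega(x,(b-a)2^{-k})\le ((b-a)2^{-k})^{\alpha}\sup_{0<\sigma\le T}\sigma^{-\alpha}\omega(x,\sigma)$, summing the geometric series $\sum_{k\ge1}2^{-k\alpha}=\tfrac{2^{-\alpha}}{1-2^{-\alpha}}=\tfrac{1}{2^{\alpha}-1}$ gives directly $\|x\|_\infty\le\tfrac{T^{\alpha}}{2^{\alpha}-1}\sup_{0<\sigma\le T}\omega(x,\sigma)/\sigma^{\alpha}$. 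Alternatively, one can take the limit in~\eqref{eqn:LInftyJBound}: $\left(\tfrac{\beta}{2^{\beta}-1}\right)^{\alpha/\beta}\to (2^{\alpha})^{-1}$ (since $(2^{\beta})^{-\alpha/\beta}=2^{-\alpha}$ and $\beta^{\alpha/\beta}\to1$), while $\tfrac{\beta}{\beta/\alpha-1}\to\alpha$ so $\left(1-2^{-\beta/(\beta/\alpha-1)}\right)^{-(\beta-\alpha)/\beta}\to(1-2^{-\alpha})^{-1}$, and the product $2^{-\alpha}(1-2^{-\alpha})^{-1}=(2^{\alpha}-1)^{-1}$, together with $j_{\alpha,\beta}(x,[0,T])^{\alpha/\beta}\to j_{\alpha,\infty}(x,[0,T])=\sup_{0<\sigma\le T}\sigma^{-\alpha}\omega(x,\sigma)$. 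I would favor including the direct dyadic computation, as it is self-contained and avoids interchanging limits. The only delicate point throughout is the strict-versus-nonstrict inequality $\alpha<\beta$ needed for $\delta>1$ in the Lemma; apart from that the proof is a mechanical substitution, so there is no substantive obstacle — the ``hard part'' is merely bookkeeping the exponents correctly.
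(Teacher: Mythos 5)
Your derivation of \eqref{eqn:LInftyJBound} is exactly the paper's: the text preceding the corollary obtains it ``immediately'' by the substitution $\gamma=\beta+1$, $\delta=\beta/\alpha$ in \eqref{eqn:FundamentalEstimate}, and your bookkeeping of the exponents and of the bound $j_{\alpha,\beta}(x,[0,T])^{\alpha/\beta}\le\|x\|_{\alpha,\beta}$ is correct. Your caveat about the boundary case $\alpha=\beta$ (where $\delta=1$ and the H\"older step in the Lemma degenerates) is a legitimate point that the paper silently ignores; it can be repaired by summing directly without H\"older, with the constant interpreted as the $\delta\to1^+$ limit, but you are right to flag it. Where you genuinely diverge is the limiting statement \eqref{eqn:LInftyJBoundInfty}: the paper's proof consists solely of this part and proceeds by your second route, namely computing the limits of the two prefactors (getting $2^{-\alpha}$ and $(1-2^{-\alpha})^{-1}$, exactly as you do) and then invoking the convergence $\|\cdot\|_{L^p(\mu)}\to\|\cdot\|_{L^\infty(\mu)}$ with $d\mu=\sigma^{-1}d\sigma$ as $p=\beta/\alpha\to\infty$. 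Your preferred route --- rerunning the dyadic telescoping bound directly against $\sup_{0<\sigma\le T}\sigma^{-\alpha}\omega(x,\sigma)$ and summing the geometric series $\sum_{k\ge1}2^{-k\alpha}=(2^{\alpha}-1)^{-1}$ --- is different and, I would say, preferable: it is self-contained, yields \eqref{eqn:LInftyJBoundInfty} as an honest inequality for the fixed function $x$ rather than as a limit of inequalities, and sidesteps the slightly delicate $L^p(\mu)\to L^\infty(\mu)$ passage on the infinite measure $\sigma^{-1}d\sigma$, which the paper invokes as ``well-known'' without noting that it requires $x$ to lie in some $J_{\alpha,\beta_0}$ to begin with. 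The trade-off is that the paper's route also illustrates the assertion $J_{\alpha,\infty}=H_\alpha$ made in the surrounding remark, which is presumably why the authors chose it.
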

\begin{proof}
	We only have to show that in the limit $\beta\rightarrow\infty$ the bound becomes \eqref{eqn:LInftyJBoundInfty}. By simple rules of calculus, the prefactor is simply
	\begin{equation}
		\underbrace{\left(\frac{\beta}{2^{\beta}-1}\right)^\frac{\alpha}{\beta}}_{\rightarrow 2^{-\alpha}} \underbrace{\left(1-2^{-\frac{\beta}{\frac{\beta}{\alpha}-1}}\right)^{-\frac{\beta - \alpha}{\beta}}}_{\rightarrow (1-2^{-\alpha})^{-1}}.
	\end{equation}
	Now, write the norm as
	\begin{equation}
		\|x\|_{\alpha,\beta} = \left(\int_0^T \sigma^{-1} \left(\frac{\omega(x,\sigma)}{\sigma^\alpha}\right)^\frac{\beta}{\alpha}d\sigma\right)^\frac{\alpha}{\beta},
	\end{equation}
	and set $p:= \beta/\alpha$. Then, the above is the $p$-th norm of $\omega(x,\sigma)/\sigma^{\alpha}$ in a weighted Lebesgue space $L^p(\mu)$ with $d\mu := \sigma^{-1} d\sigma$. Because $\alpha$ is fixed, we have $p\rightarrow\infty$ when $\beta\rightarrow\infty$. Hence, due to the well-known results in passing from $L^p(\mu)$ to $L^\infty(\mu)$, we obtain
	\begin{equation}
		\|x\|_{\alpha,\beta} = \left\|\frac{\omega(x, \cdot)}{(\cdot)^\alpha}\right\|_{L^p(\mu)} \longrightarrow \left\|\frac{\omega(x, \cdot)}{(\cdot)^\alpha}\right\|_{L^\infty(\mu)} \quad \text{as} \quad p \longrightarrow \infty,
	\end{equation}
	which is equivalent to \eqref{eqn:LInftyJBoundInfty} after taking into account the limit of the prefactor. The proof is complete. 
\end{proof}

Now, we can formulate our result on the error of interpolation in the $J_{\alpha,\beta}[0,1]$ space. 
\begin{prop}\label{prop:LInftyJInterpolation}
	Let $x \in J_{\alpha,\beta}[0,1]$. Then, for its linear interpolation \eqref{eqn:LinearInterpolation} and $0<\alpha\leq \beta$ we have
	\begin{equation}
		\|x - I_h x\|_\infty \leq \left(\frac{\beta}{2^{\beta}-1}\right)^\frac{\alpha}{\beta} \left(1-2^{-\frac{\beta}{\frac{\beta}{\alpha}-1}}\right)^{-\frac{\beta - \alpha}{\beta}}h^\alpha \left(\int_0^h \sigma^{-\beta-1} \omega(x,\sigma)^\frac{\beta}{\alpha}\right)^\frac{\alpha}{\beta},
	\end{equation}
	while for $\beta\rightarrow\infty$ we obtain
	\begin{equation}
		\|x-I_h x\|_\infty \leq \frac{h^\alpha}{2^{\alpha}-1} \sup_{0<\sigma\leq h}\frac{\omega(x,\sigma)}{\sigma^\alpha}.
	\end{equation}
\end{prop}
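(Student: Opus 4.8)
The plan is to reduce the interpolation error estimate to the fundamental bound in Corollary~\ref{cor:LInftyJBound} by exploiting a basic but essential structural fact: linear interpolation is \emph{local and exact on affine functions}. Fix a subinterval $[t_n, t_{n+1}]$ of length $h$ and set $e := x - I_h x$ restricted to that subinterval. The first step is to observe that on $[t_n,t_{n+1}]$ we may write $e$ as the interpolation error of $x$ alone, and since $I_h$ reproduces the value of $x$ at both endpoints $t_n, t_{n+1}$, we have $e(t_n) = e(t_{n+1}) = 0$. Thus $\|x - I_h x\|_\infty = \max_{0 \le n \le N-1} \sup_{t \in [t_n,t_{n+1}]} |e(t)|$, and it suffices to bound $\|e\|_{L^\infty([t_n,t_{n+1}])}$ on each piece by the right-hand side, then take the maximum over $n$.

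Next I would apply Corollary~\ref{cor:LInftyJBound} to the function $e$ on the interval $[t_n, t_{n+1}]$, which has length $T = h$. This gives
\begin{equation}
\|e\|_{L^\infty([t_n,t_{n+1}])} \le \left(\frac{\beta}{2^\beta - 1}\right)^{\frac{\alpha}{\beta}}\left(1 - 2^{-\frac{\beta}{\frac{\beta}{\alpha}-1}}\right)^{-\frac{\beta-\alpha}{\beta}} h^\alpha \left(\int_0^h \sigma^{-\beta-1}\,\omega(e,\sigma)^{\frac{\beta}{\alpha}}\,d\sigma\right)^{\frac{\alpha}{\beta}},
\end{equation}
where $\omega(e,\cdot)$ is the modulus of continuity of $e$ computed over $[t_n, t_{n+1}]$. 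So the remaining task is to control $\omega(e,\sigma)$ in terms of $\omega(x,\sigma)$. The key estimate is $\omega(e,\sigma) \le \omega(x,\sigma)$ for all $0 < \sigma \le h$: indeed, $e = x - I_h x$ and $I_h x$ is affine on $[t_n, t_{n+1}]$ with slope $(x(t_{n+1}) - x(t_n))/h$, so for $t, \tau$ in this subinterval, $|e(t) - e(\tau)| \le |x(t) - x(\tau)| + \frac{|x(t_{n+1})-x(t_n)|}{h}|t-\tau|$; one must then argue that the combined oscillation of $e$ does not exceed that of $x$. The cleanest route is to note that since $e$ vanishes at both endpoints, $e$ attains its sup and its negative sup inside the interval, and a short argument (or, more simply, the elementary bound $\omega(e,\sigma) \le \omega(x,\sigma)$ via the observation that $|e(t)-e(\tau)|$ is dominated by $\omega(x,|t-\tau|)$ because $I_h x$ is the \emph{best} affine approximant at the endpoints — concretely, $e(t) = x(t) - x(t_n) - \frac{t-t_n}{h}(x(t_{n+1})-x(t_n))$ and by writing this as a convex-type combination one gets $|e(t)-e(\tau)| \le \omega(x,|t-\tau|)$). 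Substituting $\omega(e,\sigma) \le \omega(x,\sigma)$ into the displayed bound and then taking the maximum over $n$ — noting that $\int_0^h \sigma^{-\beta-1}\omega(x,\sigma)^{\beta/\alpha}\,d\sigma$ does not depend on $n$ — yields the claimed inequality. The $\beta \to \infty$ case follows by the same limiting computation of the prefactor as in Corollary~\ref{cor:LInftyJBound}, combined with the passage from $L^p(\mu)$ to $L^\infty(\mu)$ applied to $\omega(x,\cdot)/(\cdot)^\alpha$ on $(0,h]$.

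The main obstacle I anticipate is the clean justification of $\omega(e,\sigma) \le \omega(x,\sigma)$. The naive triangle-inequality bound produces $\omega(e,\sigma) \le \omega(x,\sigma) + \frac{\sigma}{h}\,\omega(x,h)$, which already suffices to get the right \emph{order} $h^\alpha$ but with a slightly worse constant; obtaining the sharp constant stated in the proposition requires the better bound. The sharp bound does hold and follows from the identity $e(t) - e(\tau) = [x(t) - x(\tau)] - \frac{t-\tau}{h}[x(t_{n+1}) - x(t_n)]$ together with the fact that the secant increment is itself an average of increments of $x$ over length $|t-\tau|$ in an appropriate sense; alternatively one can invoke the representation $e(t) = \frac{(t_{n+1}-t)}{h}(x(t)-x(t_n)) - \frac{(t-t_n)}{h}(x(t_{n+1})-x(t))$, which exhibits $e$ as a difference of two $x$-increments with nonnegative coefficients summing to a quantity controlled by the interval length, from which the sharp modulus bound is extracted. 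I would present this representation explicitly, verify the coefficient bookkeeping, and then the rest of the proof is the mechanical substitution and maximization described above.
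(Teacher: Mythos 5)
Your overall strategy---localize to one subinterval and invoke Corollary~\ref{cor:LInftyJBound} with $T=h$---is the right one, but the step you yourself identify as the main obstacle is where the argument breaks: the ``sharp'' estimate $\omega(x-I_hx,\sigma)\le\omega(x,\sigma)$ that you need in order to apply the corollary to $e=x-I_hx$ is simply false. Take $h=1$ on $[0,1]$ and let $x$ be piecewise linear with $x(0)=0$, $x(0.1)=2$, $x(0.2)=0$, $x(1)=1$. Then $I_hx(t)=t$, $e(t)=x(t)-t$, and $\omega(x,0.1)=2$, while $|e(0.1)-e(0.2)|=|1.9-(-0.2)|=2.1>2$, so $\omega(e,0.1)>\omega(x,0.1)$. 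The mechanism is exactly the one visible in the identity $e(t)-e(\tau)=[x(t)-x(\tau)]-\frac{t-\tau}{h}[x(t_{n+1})-x(t_n)]$: when the extremal increment of $x$ has sign opposite to the secant slope, the linear correction \emph{adds} to the oscillation. Neither of the representations you offer repairs this; the second one, $e(t)=\frac{t_{n+1}-t}{h}(x(t)-x(t_n))-\frac{t-t_n}{h}(x(t_{n+1})-x(t))$, controls the \emph{pointwise value} $|e(t)|$ by increments of $x$ over distances up to $h$, not the $\sigma$-modulus of $e$. Your fallback $\omega(e,\sigma)\le\omega(x,\sigma)+\frac{\sigma}{h}\omega(x,h)\le 3\,\omega(x,\sigma)$ (via subadditivity of the modulus) does rescue the \emph{order} $h^\alpha$, but it proves the proposition only up to a multiplicative constant, not with the constant actually asserted in the statement.

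The paper's proof sidesteps the modulus of $e$ entirely, and it is essentially the convex-combination identity you wrote down, used pointwise: since $(t-t_n)+(t_{n+1}-t)=h$, one has $|x(t)-I_hx(t)|\le\max\{|x(t)-x(t_n)|,\,|x(t)-x(t_{n+1})|\}$, and then Corollary~\ref{cor:LInftyJBound} is applied on $[t_n,t_{n+1}]$ to the \emph{translates} $x-x(t_n)$ and $x-x(t_{n+1})$ (each vanishing at an endpoint), whose modulus of continuity is exactly $\omega(x,\cdot)$ because subtracting a constant changes nothing. That yields the stated constant directly. I recommend you replace the application of the corollary to $e$ by this pointwise convex-combination bound; the rest of your write-up (maximizing over $n$, noting the integral is independent of $n$, and the $\beta\to\infty$ limit) then goes through as you describe.
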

\begin{proof}
	Take $t\in [t_n, t_{n+1}]$ for $0\leq n\leq N-1$ and notice that since $x(t) = (t-t_n)x(t) + (t_{n+1}-t)x(t)$ from \eqref{eqn:LinearInterpolation} we can write
	\begin{equation}
		\begin{split}
			|x(t) - I_h x(t)| 
			&= h^{-1}\left|(t-t_n)(x(t)-x(t_{n+1})) + (t_{n+1}-t) (x(t) - x(t_n))\right| \\
			&\leq 2\max\{|x(t)-x(t_{n+1})|, |x(t) - x(t_n)|\}.
		\end{split}
	\end{equation}
	But the modulus of continuity is not changed when we subtract a constant from the function, hence we can use Corollary \ref{cor:LInftyJBound} to obtain
	\begin{equation}
		|x(t) - I_h x(t)| \leq \left(\frac{\beta}{2^{\beta}-1}\right)^\frac{\alpha}{\beta} \left(1-2^{-\frac{\beta}{\frac{\beta}{\alpha}-1}}\right)^{-\frac{\beta - \alpha}{\beta}}h^\alpha \left(\int_0^h \sigma^{-\beta-1} \omega(x,\sigma)^\frac{\beta}{\alpha}\right)^\frac{\alpha}{\beta}.
	\end{equation}
	Since this is true for any subinterval $[t_n, t_{n+1}]$ we conclude that it also holds for the supremum taken on the left-hand side. The proof is complete. 
\end{proof}

\paragraph{Example.} Take a $\gamma$-H\"older function with $0<\alpha<\gamma<1$. Then $\omega(x,\sigma) = L \sigma^\gamma$. By simple computations we have
\begin{equation}
	\begin{split}
		\|x - I_h x\|_\infty 
		&\leq L\left(\frac{\beta}{2^{\beta}-1}\right)^\frac{\alpha}{\beta} \left(1-2^{-\frac{\beta}{\frac{\beta}{\alpha}-1}}\right)^{-\frac{\beta - \alpha}{\beta}}h^\alpha \left(\frac{h^{\beta(\gamma/\alpha-1)}}{\beta(\frac{\gamma}{\alpha}-1)}\right)^\frac{\alpha}{\beta}\\
		&\leq L\left(2^{\beta}-1\right)^{-\frac{\alpha}{\beta}} \left(1-2^{-\frac{\beta}{\frac{\beta}{\alpha}-1}}\right)^{-\frac{\beta - \alpha}{\beta}}\left(\frac{\gamma}{\alpha}-1\right)^{-\frac{\alpha}{\beta}}h^{\gamma}.
	\end{split}
\end{equation}
Which is the correct order for the error of the linear interpolation for H\"older functions (see \cite{okrasinska2025functional}). Note also that for $\beta\rightarrow\infty$ we can even take $\gamma = \alpha$, which further corresponds to the usual results for linear interpolation. That is to say, having limited smoothness, we cannot get past the order of regularity of the interpolated function. 

\subsection{Linear collocation scheme}
Now, we are ready to set up the linear collocation scheme. According to the definition of the method, we require that the integral equation \eqref{eqn:MainEq} be satisfied at the nodes of our discretization \eqref{eqn:Grid}. By $x_h$ define the \emph{continuous piecewise linear} approximation to $x$. The collocation conditions are, thus, the following
\begin{equation}\label{eqn:LinearCollocationScheme}
	\begin{cases}
		x_h \text{ is a linear function on each } [t_n, t_{n+1}], & 0 \leq n < N, \\
		x_h(t_n^+) = x_h(t_{n+1}^-), & 0 < n < N, \\
		x_h(t_n) = (Tx_h)(t_n), & 0 \leq n \leq N, \\
	\end{cases}
\end{equation}
where the operator $T$ is defined in \eqref{eqn:OpeartorT}. The above defines $(N-1) + (N + 1) = 2N$ conditions. Since we have $N$ subintervals and on each the approximation is linear, we have $N \times 2 = 2N$ coefficients to find. Therefore, the number of unknowns is the same as the number of equations. To devise a convenient form for the implementation, it is natural to introduce the \textbf{Lagrange basis} (tent functions)
\begin{equation}
	\begin{split}
		L_n(t) &:= 
		\begin{cases} 
			1-\dfrac{|t-t_n|}{h}, & t \in [t_{n-1}, t_{n+1}], \\
			0, & \text{elsewhere},
		\end{cases}
		\quad 0 < n < N, \\
		L_0(t) &:= \begin{cases} 
			\dfrac{t_1-t}{h}, & t \in [t_0, t_1], \\
			0, & \text{elsewhere},
		\end{cases}
		\quad
		\quad L_N(t) := \begin{cases} 
			\dfrac{t-t_{N-1}}{h}, & t \in [t_{N-1}, t_{N}], \\
			0, & \text{elsewhere}.
		\end{cases}
	\end{split}
\end{equation}
Of course, $L_n(x_m) = \delta_{nm}$ and thus the linear interpolation can conveniently be written as
\begin{equation}
	I_h x(t) = \sum_{i=0}^{N} x(t_n) L_n(t).
\end{equation}
In particular, expansion of a constant function $x(t) \equiv 1$ gives
\begin{equation}\label{eqn:LinearInterpolationConstant}
	\left|\sum_{n=0}^{N} L_n(t)\right| = 1.
\end{equation}
Moreover, any piecewise linear function (not necessarily the interpolant) can be decomposed into a combination of Lagrange functions. In particular, the collocation approximation $x_h$. We thus have,
\begin{equation}\label{eqn:LagrangeExpansion}
	x_h(t) = \sum_{n=0}^{N} x_h(t_n) L_n(t), \quad t \in [0,1].
\end{equation} 
By this and \eqref{eqn:MainEq} we have $x_h(t_0) = 0$ and 
\begin{equation}
	\begin{split}
		x_h(t_n) &= \int_0^{t_n} G(t_n, s) f(s, x_h(s)) ds = \sum_{i = 0}^{n-1} \int_{t_i}^{t_{i+1}} G(t_n, s) f(s, x_h(s)) ds \\
		&= \sum_{i = 0}^{n-1} \int_{t_i}^{t_{i+1}} G(t_n, s) f(s, x(t_i) L_i(s) + x(t_{i+1}) L_{i+1}(s)) ds, \quad 1 \leq n \leq N. 
	\end{split}
\end{equation}
Therefore, the collocation approximation can be obtained by solving the following nonlinear system
\begin{equation}\label{eqn:LinearCollocationSystem}
	\textbf{x}_h = \textbf{F}(\textbf{x}_h),
\end{equation}
where $\textbf{x}_h = (x_h(t_n))_{n=1}^N$ and $\textbf{F}(\textbf{x}_h) = \left(F_n(x_h(t_1), ..., x_h(t_n))\right)_{n=1}^N$, where
\begin{equation}\label{eqn:LinearCollocationSystemCoeff}
	F_n(x_h(t_1), ..., x_h(t_n)) = \sum_{i = 0}^{n-1} \int_{t_i}^{t_{i+1}} G(t_n, s) f(s, x(t_i) L_i(s) + x(t_{i+1}) L_{i+1}(s)) ds. 
\end{equation} 
Note that, due to our formulation, the continuity conditions are automatically satisfied. Along with the known initial condition $x_h(0) = 0$ we reduce the number of equations from $2N$ to $N$. Of course, in each step, we have to discretize the integral appearing in \eqref{eqn:LinearCollocationSystemCoeff} with, for example, trapezoid quadrature since it is exact for piecewise polynomial integrands (and having second order for others). As we can see, to compute a value $x_h(t_n)$ we have to know all the previous values of the approximation. Therefore, the system \eqref{eqn:LinearCollocationSystem} can be solved iteratively starting with specifying the starting value $x_h(0) = 0$. 

Having defined the linear collocation scheme, we can proceed to prove that it actually converges to the exact solution of \eqref{eqn:MainEq}. 
\begin{thm}[Convergence of the linear collocation scheme]\label{thm:LinearCollocationConvergence}
	Let the assumptions of Theorem \ref{thm:Existence} be satisfied. Suppose that $\phi$ from \eqref{eqn:AssumF} satisfies $\phi(x) \leq L x$ with $L>0$. Then, we have
	\begin{equation}\label{eqn:LinearCollocationError}
		\|x - x_h\|_\infty \leq C_{G,f} \left(\frac{\beta}{2^{\beta}-1}\right)^\frac{\alpha}{\beta} \left(1-2^{-\frac{\beta}{\frac{\beta}{\alpha}-1}}\right)^{-\frac{\beta - \alpha}{\beta}}h^\alpha \left(\int_0^h \sigma^{-\beta-1} \omega(x,\sigma)^\frac{\beta}{\alpha}\right)^\frac{\alpha}{\beta},
	\end{equation}
	for some constant $C_{G,f}>0$ dependent only on $G$ and $f$. 
\end{thm}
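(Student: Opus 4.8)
The plan is to compare the exact solution $x$ and the collocation approximant $x_h$ through the intermediate object $I_h x$, the linear interpolant of $x$. The triangle inequality gives $\|x - x_h\|_\infty \leq \|x - I_h x\|_\infty + \|I_h x - x_h\|_\infty$, and the first term is already estimated by Proposition \ref{prop:LInftyJInterpolation}. So the whole difficulty is to bound the second term, $\|I_h x - x_h\|_\infty$, by a constant times the interpolation error $\|x - I_h x\|_\infty$; combining the two then yields \eqref{eqn:LinearCollocationError} with $C_{G,f}$ absorbing everything outside the structural prefactor already isolated in Proposition \ref{prop:LInftyJInterpolation}.

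To do this I would first record the two sets of equations both approximations satisfy \emph{at the nodes}. The exact solution gives $x(t_n) = (Tx)(t_n)$, and applying $I_h$ changes nothing at the nodes, so $I_h x(t_n) = (Tx)(t_n)$. The collocation scheme gives $x_h(t_n) = (Tx_h)(t_n)$. Subtracting, $I_h x(t_n) - x_h(t_n) = (Tx)(t_n) - (Tx_h)(t_n)$. Since $I_h x - x_h$ is piecewise linear and agrees at the nodes with the interpolant of $Tx - Tx_h$, I have $\|I_h x - x_h\|_\infty \le \max_{0\le n\le N}|(Tx)(t_n) - (Tx_h)(t_n)| = \|I_h(Tx - Tx_h)\|_\infty \le \|Tx - Tx_h\|_\infty$, using that linear interpolation of a continuous function does not increase the sup-norm (this follows from \eqref{eqn:LinearInterpolationConstant} and the convexity bound already used in the proof of Proposition \ref{prop:LInftyJInterpolation}). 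Then, exactly as in the continuity argument in the proof of Theorem \ref{thm:Existence}, using \eqref{eqn:AssumG}, \eqref{eqn:AssumF} and $\phi(u)\le Lu$, I get
\begin{equation}
\|Tx - Tx_h\|_\infty \le \big(K_G\|\rho\|_\infty + M_G\big)\, L\, \|x - x_h\|_\infty =: L'\,\|x-x_h\|_\infty,
\end{equation}
where the bound on $|(Tx)(t)-(Tx_h)(t)|$ splits into the $|G(t,s)-G(0,s)|$ piece (handled by $K_G\rho$) plus the tail $\int_0^t|G(t,s)||f(s,x(s))-f(s,x_h(s))|ds$ (handled by $M_G$).

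Putting these together, $\|x - x_h\|_\infty \le \|x - I_h x\|_\infty + L'\,\|x - x_h\|_\infty$, so provided $L' < 1$ we obtain $\|x - x_h\|_\infty \le (1-L')^{-1}\|x - I_h x\|_\infty$, and Proposition \ref{prop:LInftyJInterpolation} finishes the estimate with $C_{G,f} = (1-L')^{-1}$. The main obstacle is the smallness condition $L' = (K_G\|\rho\|_\infty + M_G)L < 1$: a naive Volterra argument only gives this on a short subinterval $[0,\delta]$, and to reach all of $[0,1]$ one must iterate the above reasoning on successive subintervals $[t_k, t_{k+1}]$ (or a finite partition of $[0,1]$ of mesh small enough that the local contraction constant is $<1$), piecing the local bounds together via a discrete Gr\"onwall-type argument; the Volterra (causal) structure — $x_h(t_n)$ depends only on $x_h(t_1),\dots,x_h(t_{n-1})$ — is what makes this propagation work and keeps $C_{G,f}$ depending only on $G$, $f$ (and not on $h$). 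One should also note that for the capillary-rise nonlinearity $f(s,x)=1-\sqrt{2x}$ the hypothesis $\phi(u)\le Lu$ fails near $x=0$; in applications this is circumvented by working on the invariant ball $B_{r_0}^{\alpha,\beta}$ where $x$ is bounded away from $0$, so that $f$ is genuinely Lipschitz there, which is the implicit setting of the theorem.
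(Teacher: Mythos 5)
Your route is genuinely different from the paper's. The paper writes $x_h = I_h T x_h$ and decomposes the error as $e_h = T e_h + \bigl(Tx_h - I_h Tx_h\bigr)$, so the quantity that gets interpolated is $Tx_h$; the pointwise inequality $|e_h(t)| \le C L \int_0^t |e_h(s)|\,ds + \rho_h(t)$ is then closed by the \emph{continuous} Gr\"onwall lemma, with no smallness condition on $L$, $M_G$ or $K_G$, and Proposition \ref{prop:LInftyJInterpolation} is applied to $Tx_h$ to bound $\rho_h$. You instead split through $I_h x$ and reduce everything to $\|Tx - Tx_h\|_\infty$. Your identity $I_h x - x_h = I_h(Tx - Tx_h)$ and the sup-norm stability of $I_h$ are correct, and your route has the advantage that the interpolation error which appears is that of $x$ itself, so Proposition \ref{prop:LInftyJInterpolation} delivers exactly the modulus $\omega(x,\cdot)$ appearing in \eqref{eqn:LinearCollocationError} (the paper's proof, strictly speaking, produces $\omega(Tx_h,\cdot)$ and leaves the identification with $\omega(x,\cdot)$ implicit).

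The weak point is the step where you bound $\|Tx - Tx_h\|_\infty \le L'\|x - x_h\|_\infty$ and then need $L' = (K_G\|\rho\|_\infty + M_G)L < 1$. That condition is not among the hypotheses, and you only sketch how to remove it (subinterval iteration plus a discrete Gr\"onwall argument) without executing it. The cleaner fix is available one line earlier: do not take the supremum inside the integral. Keeping
\begin{equation}
	|(Tx)(t) - (Tx_h)(t)| \le M_G L \int_0^t |x(s) - x_h(s)|\,ds
\end{equation}
(note that only $M_G$ is needed here; the $K_G\rho$ term you mention plays no role in a sup-norm estimate of $Tx - Tx_h$, only in modulus-of-continuity estimates) and combining it with your nodal identity gives, for $t \in [t_n, t_{n+1}]$, an inequality of the form $|x(t)-x_h(t)| \le \|x - I_h x\|_\infty + M_G L \int_0^{t_{n+1}} |x(s)-x_h(s)|\,ds$, which a (discrete or continuous) Gr\"onwall argument closes for all $h$ small enough, with a constant independent of $h$ and with no contraction hypothesis. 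As written, your proof is incomplete at precisely this point; with that repair it is a valid alternative to the paper's argument. Your closing remark about the capillary-rise nonlinearity is a fair caveat about the scope of the hypothesis $\phi(u)\le Lu$, but the claim that the solution is bounded away from zero on the invariant ball is not justified (the solution satisfies $x(0)=0$); this does not affect the theorem as stated, since the Lipschitz bound is assumed.
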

\begin{proof}
	By the definition of the collocation scheme \eqref{eqn:LinearCollocationScheme} and the interpolation operator \eqref{eqn:LinearInterpolation}, since $x_h$ is a piecewise linear polynomial, we have for any $t\in[0,1]$
	\begin{equation}
		x_h(t) = I_h x_h(t) = \sum_{n=1}^N L_n(t) x_h(t_n) = \sum_{n=1}^N L_n(t) Tx_h(t_n) = I_h Tx_h (t).
	\end{equation}	
	Let $e_h := x - x_h$ be the error of the scheme. Then, by the above, we have
	\begin{equation}
		e_h(t) = x(t) - x_h(t) = Tx(t) - I_h Tx_h(t) = Te_h(t) + Tx_h(t) - I_h T x_h(t).
	\end{equation}
	Therefore, by the definition of the operator $T$ as in \eqref{eqn:OpeartorT} and the regularity assumptions \eqref{eqn:AssumG}-\eqref{eqn:AssumG}, we have
	\begin{equation}
		|e_h(t)| \leq K_G \int_0^t \phi(|e_h(s)|) ds + \rho_h(t),
	\end{equation}
	where 
	\begin{equation}
		\rho_h(t) := |Tx_h(t) - I_h T x_h(t)|. 
	\end{equation}
	But, due to the assumption $\phi(|e_h(s)|) \leq L |e_h(s)|$, and hence
	\begin{equation}
		|e_h(t)| \leq K_G L \int_0^t |e_h(s)|ds + \rho_h(t).
	\end{equation} 
	Now, by the classical Gronwall inequality, for some new constant $D_{G,f}$ we obtain
	\begin{equation}
		|e_h(t)| \leq \rho_h(t) + D_{G,f} \int_0^t \rho_h(s) ds \leq (1 + D_{G,f}) \|\rho_h\|_\infty. 
	\end{equation}
	Hence, setting $C_{G,f} := 1+ D_{G,f}$,
	\begin{equation}
		\|e_h\|_\infty \leq C_{G,f} \|\rho_h\|_\infty. 
	\end{equation}	
	But due to Proposition \ref{prop:TNorm} we have $Tx \in J_{\alpha,\beta}[0,1]$, and therefore from Proposition \ref{prop:LInftyJInterpolation} we can obtain the estimate on the interpolation error $\|\rho_h\|_\infty$. The proof concludes. 
\end{proof}
The linear collocation scheme thus converges with an error of the same form as the interpolation error. This corresponds to similar results of convergence in more regular and common spaces (see \cite{okrasinska2025functional}).

\subsection{Spectral collocation scheme}
The linear collocation scheme devised in the previous subsection is a versatile method for solving \eqref{eqn:MainEq} in spaces without much regularity. However, it becomes inefficient when the solution is actually very smooth. On the other end of the spectrum of numerical methods we have spectral collocation schemes that are well-suited for such a case. Here, we describe a version of the spectral method based on Lagrange orthogonal polynomials.

Spectral collocation methods use expansions in the basis of orthogonal polynomials. It is well known that, provided sufficient smoothness, their accuracy can be exponential (so-called \textit{spectral accuracy}). Instead of using a uniform grid, we retain the freedom to approximate the unknown solution $x$ of \eqref{eqn:MainEq} by an approximation $x^N$ that we collocate with the equation on an arbitrarily spaced grid \eqref{eqn:Grid}. In contrast to the linear collocation scheme described above, we now would like to collocate a single (and not piecewise) polynomial of degree $N$. To avoid instability, we have to choose a non-uniform grid to be chosen below. That is, the \textbf{spectral collocation approximation} is defined by
\begin{equation}\label{eqn:SpectralCollocationScheme}
	x^N(t_n) = (Tx^N)(t_n), \quad x^N(0) = 0, \quad 0< n \leq N, \quad x^N \in \mathbb{P}_N[0,1].
\end{equation}
To facilitate convergence analysis, we would like to cast this scheme into a unified framework of functional analysis. More specifically, we want to derive a weak form of the above collocation equations. 

Since our initial condition vanishes, we will find a spectral approximation of the solution in the following space
\begin{equation}
	V_N := \left\{v \in \mathbb{P}_N[0,1]: v(0) = 0 \right\} \subseteq L^2[0,1].
\end{equation}
Any polynomial from the above space can be expanded in the Lagrange basis, analogously as above. However, now instead of using $N$-linear polynomials, we use one of $N$th degree with collocation points $\left\{t_n\right\}_{n=0}^N$ (not necessarily uniform). That is, the \textbf{$N$-th degree Lagrange basis} is defined by 
\begin{equation}
	L^N_n(t) = \prod_{\substack{0\leq i \leq N \\i \neq n}} \frac{t - t_i}{t_n - t_i}.
\end{equation} 
As can also be checked, $L^N_n(t_m) = \delta_{nm}$. Hence, we can define the interpolation operator $I^N: C[0,1] \mapsto \mathbb{P}_N[0,1]$
\begin{equation}
	I^N y(t) = \sum_{n=1}^N y(t_n) L^N_n(t), \quad y \in C[0,1].
\end{equation}
Now, define the \textbf{continuous inner product} on by
\begin{equation}
	(x,y) := \int_0^1 x(t) y(t) dt, \quad x, y \in C[0,1],
\end{equation}
and its \textbf{discrete version} by
\begin{equation}\label{eqn:DiscreteInnerProduct}
	(x,y)_N := \sum_{n=0}^N x(t_n) y(t_n) w_n.
\end{equation}
We immediately see that $(L^N_i, L^N_j)_N = \delta_{i j} w_i$ so that the Lagrange basis is orthogonal with respect to this inner product.  

Furthermore, we would like to choose the nodes $t_n$ and the weights $w_n$ such that the discrete inner product would agree with the continuous one when acting on a space of polynomials of maximal degree. This ensures spectral accuracy. As is well-known from numerical analysis, these are Gauss-Legendre nodes and weights scaled to the interval $[0,1]$ (see \cite{canuto2006spectral})
\begin{equation}\label{eqn:SpectralCollocationLegendreNodes}
	\left\{t_n\right\}_{n=0}^N := \text{zeros of }P_N(2t-1), \quad \left\{w_n\right\}_{n=0}^N := \left\{\frac{2}{(1-t_n^2) P'_N(2t_n-1)}\right\}_{n=0}^N.
\end{equation}
For which we have
\begin{equation}\label{eqn:DiscreteContinuousInner}
	(x,y)_N = (x,y), \quad xy \in \mathbb{P}_{2N+1}.
\end{equation}
Here, Legendre polynomials can be obtained via standard recurrence
\begin{equation}
	(n+1)P_{n+1}(t) = (2n+1)P_n - n P_n(t), \quad P_0(x) = 1, \quad P_1(x) = x.
\end{equation}
Our scheme \eqref{eqn:SpectralCollocationScheme} can now be cast in the weak form by noting that $\left\{L^N_n\right\}_{n=0}^N$ spans $V_N$. The integral operator $T$ in \eqref{eqn:SpectralCollocationScheme} can be discretized by the Gaussian quadrature \emph{on each} interval $[0,t_n]$ by choosing appropriate nodes $\{t_i^{(n)}\}_{i=0}^N$ and weights $\{w_i^{(n)}\}_{i=0}^N$ (they are rescaled values of \eqref{eqn:SpectralCollocationLegendreNodes})
\begin{equation}\label{eqn:OperatorTN}
	T^N x(t_n) = \sum_{i=0}^N G(t, t_i^{(n)}) f(t_i^{(n)}, x(t_i^{(n)})) w_i^{(n)}.
\end{equation}
Therefore, our fully discretized scheme reads
\begin{equation}\label{eqn:SpectralCollocationSchemeWeak}
	(x^N, v)_N = (T^Nx^N, v)_N, \quad v \in V_N.
\end{equation}
The goal of the spectral collocation scheme is to find $x^N(t_n)$ with \eqref{eqn:SpectralCollocationLegendreNodes} such that the above is satisfied. Also, due to the choice of nonuniform nodes we obtain optimal bounds for the interpolation (see \cite{canuto2006spectral})
\begin{equation}\label{eqn:SpectralInterpolationError}
	\|x - I^N x\|_2 \leq C N^{-m} \|x^{(m)}\|_2, \quad \|x - I^N x\|_\infty \leq C N^{\frac{1}{2}-m} \|x^{(m)}\|_\infty, \quad x \in C^m[0,1],
\end{equation}
for which the spectral accuracy is evident. Moreover, the error of the Gaussian quadrature is of exactly the same form
\begin{equation}\label{eqn:GaussianQuadratureError}
	\|Tx - T^N x\|_2 \leq C N^{-m} \|x^{(m)}\|_2, \quad \|Tx - T^N x\|_\infty \leq C N^{\frac{1}{2}-m} \|x^{(m)}\|_\infty.
\end{equation}
For the implementation, we observe that on our nodes \eqref{eqn:SpectralCollocationLegendreNodes} we use the discrete operator $T^N$ defined in \eqref{eqn:OperatorTN} and interpolate when the sub-node does not belong to $\{t_n\}_{n=0}^N$, hence 
\begin{equation}
	x^N(t_n) = \sum_{i=1}^N G(t_n, t_i^{(n)}) f(t_i^{(n)}, x^N(t_i)) w_i^{(n)}, \quad 0< n \leq N.
\end{equation}
The above gives a system of $N\times N$ algebraic equations for $\{x_n\}_{n=0}^N$ that can be solved numerically using Newton or fixed-point iterations. 

Having built the framework for the spectral collocation scheme, we can proceed to the convergence proof.
\begin{thm}\label{thm:SpectralCollocationScheme}
	Let $x \in C^m[0,1]$ be a solution of \eqref{eqn:MainEq} with $m > 0$ and $x^N$ its spectral collocation approximation defined in \eqref{eqn:SpectralCollocationSchemeWeak}. Then,
	\begin{equation}
		\|x-x^N\|_2 \leq C_{G,f} N^{-m} \|x^{(m)}\|_2, \quad \|x-x^N\|_\infty \leq C_{G,f} N^{\frac{1}{2}-m} \|x^{(m)}\|_\infty, 
	\end{equation}
	for some constant $C_{G,f}> 0$ dependent only on $G$ and $f$. 
\end{thm}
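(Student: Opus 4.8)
The plan is to derive an error equation analogous to the one used in the proof of Theorem \ref{thm:LinearCollocationConvergence}, then to bound each source of error---interpolation and quadrature---using the optimal estimates \eqref{eqn:SpectralInterpolationError} and \eqref{eqn:GaussianQuadratureError}, and finally to absorb the contribution of the nonlinearity via a discrete Gronwall-type argument or a Neumann series for the perturbed operator. First I would write $x^N = I^N x^N$ on the collocation nodes and use \eqref{eqn:SpectralCollocationSchemeWeak} together with the exactness property \eqref{eqn:DiscreteContinuousInner} to pass from the discrete inner product back to the continuous one on the polynomial space $V_N$; this identifies $x^N$ as the solution of $x^N = I^N T^N x^N$ in $V_N$. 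Subtracting the exact equation $x = Tx$ and inserting and removing the terms $I^N T x$ and $I^N T^N x$, the error $e^N := x - x^N$ splits as
\begin{equation}
	e^N = (x - I^N x) + I^N(Tx - T^N x) + I^N(T^N x - T^N x^N) + (I^N x - x^N).
\end{equation}
The first two terms are the interpolation and quadrature errors, each of order $N^{-m}$ in $L^2$ (respectively $N^{1/2-m}$ in $L^\infty$) by \eqref{eqn:SpectralInterpolationError}--\eqref{eqn:GaussianQuadratureError}; the third term is controlled by the Lipschitz constant of $f$ times $\|x - x^N\|$ (using boundedness of $G$ and of $I^N$ on the relevant spaces), and the fourth term vanishes since $x^N \in V_N$ so $I^N x^N = x^N$ and the remaining piece is folded into the Gronwall step.

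Next I would make precise the Gronwall-type closure. In the spectral setting there is no genuine "lower-triangular in $t$" structure as in the piecewise-linear scheme, so the cleanest route is to treat $T^N$ as a perturbation of $T$: since $\|T^N x - T^N y\|_\infty \le M_G\,L\,\|x-y\|_\infty$ uniformly in $N$ (the Gauss weights are positive and sum to the subinterval length, so the quadrature has the same Lipschitz bound as the integral), the operator $I - I^N T^N$ is invertible on $V_N$ for the Volterra problem because iterating $T$ on $[0,t_n]$ contracts---more carefully, one uses that $T$ (and hence each $T^N$ restricted to $[0,t_n]$) is a Volterra operator whose powers have norms decaying like $(M_GL)^k/k!$, giving a uniform bound $\|(I - I^N T^N)^{-1}\|\le C$ independent of $N$. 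Applying this bound to the error equation yields $\|e^N\| \le C\big(\|x - I^N x\| + \|I^N(Tx - T^N x)\|\big)$, and since $I^N$ is bounded on $C^m$ with a constant independent of $N$ when acting on the smooth functions $Tx, T^N x$ (their derivatives up to order $m$ being controlled by those of $x$ through $G \in C^m$, which we may assume, and $f$ smooth), both terms inherit the rates of \eqref{eqn:SpectralInterpolationError} and \eqref{eqn:GaussianQuadratureError}.

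The main obstacle I anticipate is the uniform-in-$N$ invertibility of $I - I^N T^N$, equivalently the stability of the spectral collocation discretization of the Volterra operator: one must show that the interpolation projection $I^N$ composed with the nearly-singular Volterra operator does not amplify errors as $N\to\infty$. The Volterra (as opposed to Fredholm) structure is what saves us---the spectral radius of $T^N|_{[0,t_n]}$ is zero for each fixed $n$ and the norm estimates are summable---but making this rigorous uniformly across all sub-intervals $[0,t_n]$ and all $N$ requires care, and is where one typically invokes a quantitative version of the classical result that collocation for second-kind Volterra equations is stable (see \cite{brunner2004collocation}). Once that stability bound is in hand, the two claimed estimates follow immediately by inserting \eqref{eqn:SpectralInterpolationError} and \eqref{eqn:GaussianQuadratureError} and collecting the $G,f$-dependent constants into a single $C_{G,f}$.
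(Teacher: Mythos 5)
Your overall strategy (use $x^N = I^N T^N x^N$, split the error into interpolation, quadrature, and Lipschitz contributions, close with a Gronwall-type argument) matches the paper's at a high level, but you close the argument along a genuinely different and considerably harder route. The paper writes $e^N = Tx - I^N T^N x^N = Te^N + \bigl(Tx^N - I^N T^N x^N\bigr)$, so the only term carrying the unknown error is the \emph{exact} Volterra operator $Te^N$; the hypothesis $\phi(r)\le Lr$ then gives the pointwise bound $|e^N(t)|\le K_G L\int_0^t|e^N(s)|\,ds + \rho^N(t)$, and the classical continuous Gronwall lemma finishes the proof with a constant independent of $N$ --- no discrete stability result is needed anywhere. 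You instead keep the error inside the discrete operator, via the term $I^N(T^Nx - T^Nx^N)$, and are then forced to establish uniform-in-$N$ invertibility of $I - I^N T^N$, precisely the obstacle you flag as the hard part. That obstacle is self-inflicted: your Neumann-series heuristic ($\|T^k\|\lesssim (M_GL)^k/k!$) holds for the exact Volterra operator but does not transfer directly to $I^N T^N$, since the interpolation projection destroys the triangular structure, and the Lebesgue constant of Gauss--Legendre interpolation in the sup norm grows with $N$, so a bound $\|I^N(T^Nx - T^Nx^N)\|_\infty \le C\|x-x^N\|_\infty$ with $C$ independent of $N$ is not automatic. Rearranging the telescoping so that the exact $T$ acts on $e^N$ removes all of this and is what the paper does.

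Two further points. First, your four-term decomposition does not sum to $e^N$: the first three terms already telescope to $x - x^N$ (because $x - I^Nx = Tx - I^NTx$ and $I^NT^Nx^N = x^N$), so the fourth term $I^Nx - x^N = I^N(Tx - T^Nx^N)$ exactly duplicates the second and third and should be deleted rather than ``folded into the Gronwall step.'' Second, to invoke \eqref{eqn:SpectralInterpolationError} and \eqref{eqn:GaussianQuadratureError} on the residual one needs the $m$-th derivatives of $Tx^N$ (resp.\ $Tx$, $T^Nx$) to be bounded uniformly in $N$, which requires smoothness of $G$ and $f$; you note this in passing (``which we may assume''), and indeed the theorem as stated leaves this hypothesis implicit --- the paper's own proof glosses over the same point.
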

\begin{proof}
	Essentially in the same way as in the proof of Theorem \ref{thm:LinearCollocationConvergence} we can obtain the pointwise bound for the error $e^N := x - x^N$
	\begin{equation}
		|e^N(t)| \leq K_G L \int_0^t |e^N(s)| ds + \rho^N(t),
	\end{equation}
	where 
	\begin{equation}
		\rho^N(t) := |I^N Tx^N(t) - Tx^N(t)| + |T x^N(t)-T^N x^N(t)|.
	\end{equation}
	Therefore, by the same Gronwall inequality, we obtain
	\begin{equation}
		|e^N(t)| \leq \rho^N(t) + D_{G,f} \int_0^t \rho^N(s) ds. 
	\end{equation}
	For the $L^\infty$ estimate we just maximize the above to obtain $\|e^N\|_\infty \leq C_{G,f} \|\rho^N\|_\infty$, while for the $L^2$ norm we use Cauchy-Schwarz inequality
	\begin{equation}
		|e^N(t)| \leq \rho^N(t) + D_{G,f} \|\rho^N\|_2.  
	\end{equation}
	Therefore, there is a constant such that
	\begin{equation}
		\|e^N\|_2 \leq C_{G,f} \|\rho^N\|_2. 
	\end{equation}
	Since, by the assumption, $Tx^N$ is sufficiently smooth, we can use \eqref{eqn:SpectralInterpolationError} and \eqref{eqn:GaussianQuadratureError} and conclude the proof.  
\end{proof}
We have thus devised two spectral collocation schemes to solve \eqref{eqn:MainEq}: one designed for solution with limited smoothness and the other for smooth ones. In the following, we practically verify our findings.  

\section{Numerical illustration}
Here, we verify our theoretical findings with numerical experiments. We consider two classes of solutions to \eqref{eqn:MainEq}: non-smooth and smooth, to highlight the features of our two collocation schemes. In each of these cases, we consider explicit solutions to \eqref{eqn:MainEq} and based on these we compute the order of convergence. 

Note that when we put $f(s, x(s)) = \widetilde{g}(s) + \widetilde{f}(s, x(s))$ we can transform our main equation \eqref{eqn:MainEq} into
\begin{equation}
	x(t) = \int_0^t G(t, s) \widetilde{f}(s, x(s)) ds + g(t), \quad g(t) := \int_0^t G(t, s) \widetilde{g}(s) ds.
\end{equation}
Therefore, without loss of generality, we can consider the above ''nonhonogeneous'' equation. This form allows us to construct exact solutions more easily and hence test our numerical methods without the need of estimating orders of convergence via extrapolation or similar techniques. 

All the computations described in this section had been conducted with a multithreaded implementation of the schemes on a laptop computer with 12th Gen Intel(R) Core(TM) i7-1265U CPU at 1.80 GHz. The code was implemented in \texttt{Julia 1.11} programming language \cite{bezanson2017julia}. 

\subsection{Non-smooth solutions}
We choose two examples to test our linear collocation scheme \eqref{eqn:LinearCollocationScheme}. The first one, is purely H\"older continuous produced by an equation with strong H\"older nonlinerity similar as in the capillary rise equation \eqref{eqn:CapillaryRiseEq}
\begin{equation}\label{eqn:NumericalHolder}
	\begin{split}
		&x(t) = \sqrt{\frac{1}{2}-\left|t-\frac{1}{2}\right|}, \quad G(t,s) = 1, \quad \widetilde{f}(s, x) = 1 + \sqrt{x}, \\
		&g(t) =
		\begin{cases}
			\displaystyle
			\sqrt{\tfrac12 - t}
			\;-\;\Bigl(t \;+\;\tfrac{4}{5}\,t^{5/4}\Bigr),
			&0 \le t \le \tfrac12,
			\\[2ex]
			\displaystyle
			\sqrt{t - \tfrac12}
			\;-\;\Bigl(t \;+\;\tfrac{8}{5}\Bigl(\tfrac12\Bigr)^{5/4}
			\;-\;\tfrac{4}{5}\,(1 - t)^{5/4}\Bigr),
			&\tfrac12 \le t \le 1.
		\end{cases}
	\end{split}
\end{equation}
Notice the pronounced cusp at $t=1/2$ making the solution nonsmooth (see Fig. \ref{fig:NonsmoothSolutions}). In the second example, we construct a logarithmic solution that belongs to $J_{\alpha,\beta}[0,1]$ but is not H\"older
\begin{equation}\label{eqn:NumericalLogarithm}
	x(t) = \left(\ln\frac{e}{t}\right)^{-1}, \quad G(t,s) = t-s, \quad \widetilde{f}(s,x) = 1+e^{-x}, \quad g(t) = x(t) - Tx(t),
\end{equation}
where the integration needed to obtain $g(t)$ is conducted numerically with Gaussian quadrature. The solution is not smooth and has a very steep gradient at $t = 0$ (see Fig. \ref{fig:NonsmoothSolutions} for the plot).  

\begin{figure}
	\centering
	\includegraphics[scale = 0.7]{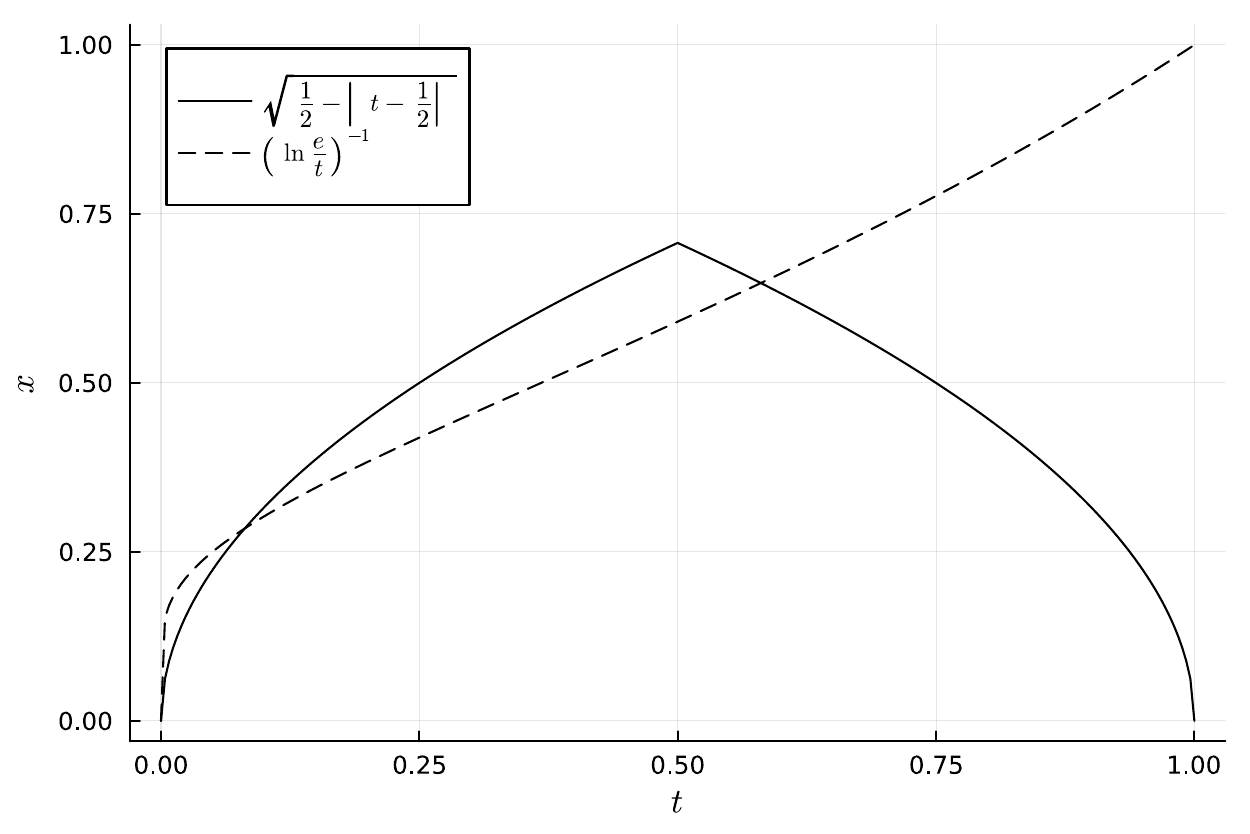}
	\caption{Exact non-smooth solutions chosen for testing the piecewise linear collocation scheme: \eqref{eqn:NumericalHolder} (solid line), \eqref{eqn:NumericalLogarithm} (dashed line).}
	\label{fig:NonsmoothSolutions}
\end{figure}

As can be seen in Fig. \ref{fig:ErrorNonsmooth} we have been able to obtain an order of convergence slightly larger than $1$ in both of our examples. Moreover, we have tested our scheme with several other examples, here not specified, and noted exactly the same behavior. According to Theorem \ref{thm:LinearCollocationConvergence}, we know that the scheme is convergence with an order at least bounded by \eqref{eqn:LinearCollocationError}. As is evident, in practice the error can be even higher than that which shows the robustness of our method.

\begin{figure}
	\centering
	\includegraphics[scale = 0.42]{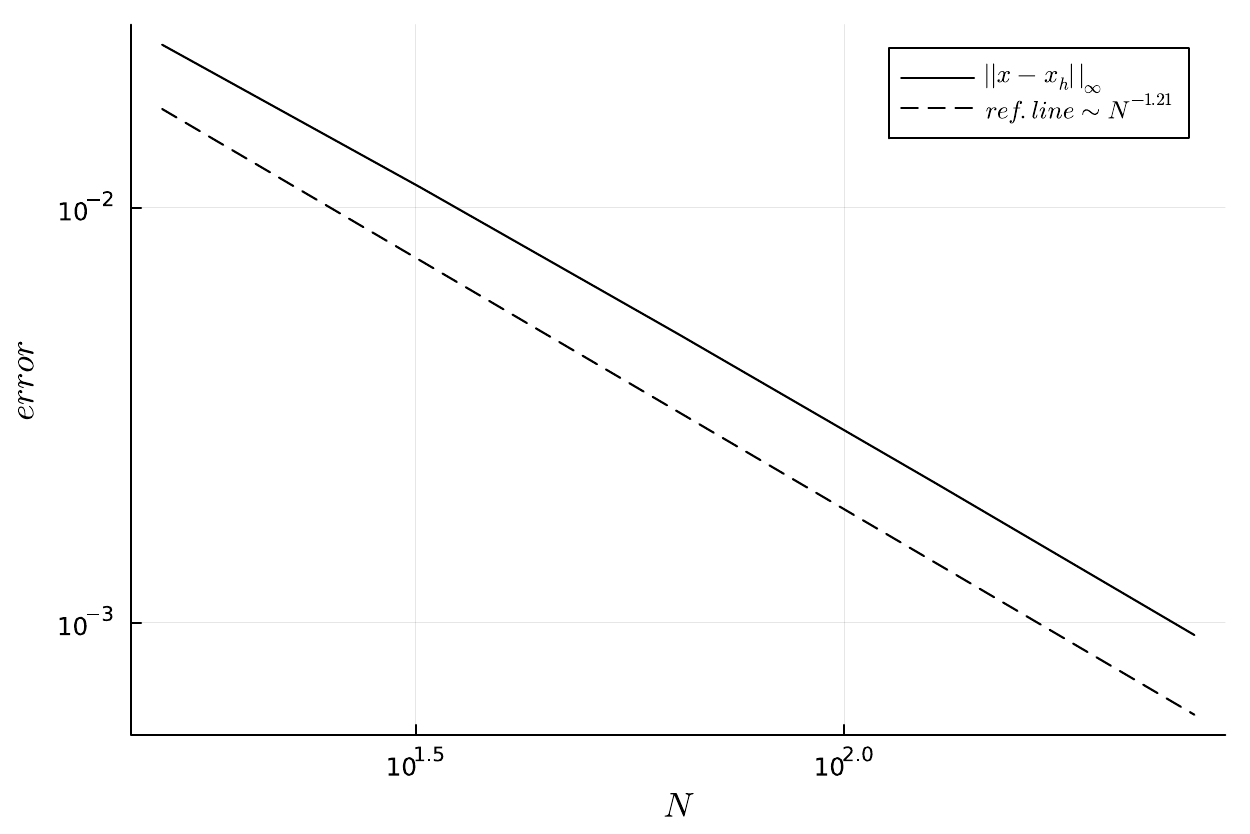}
	\includegraphics[scale = 0.42]{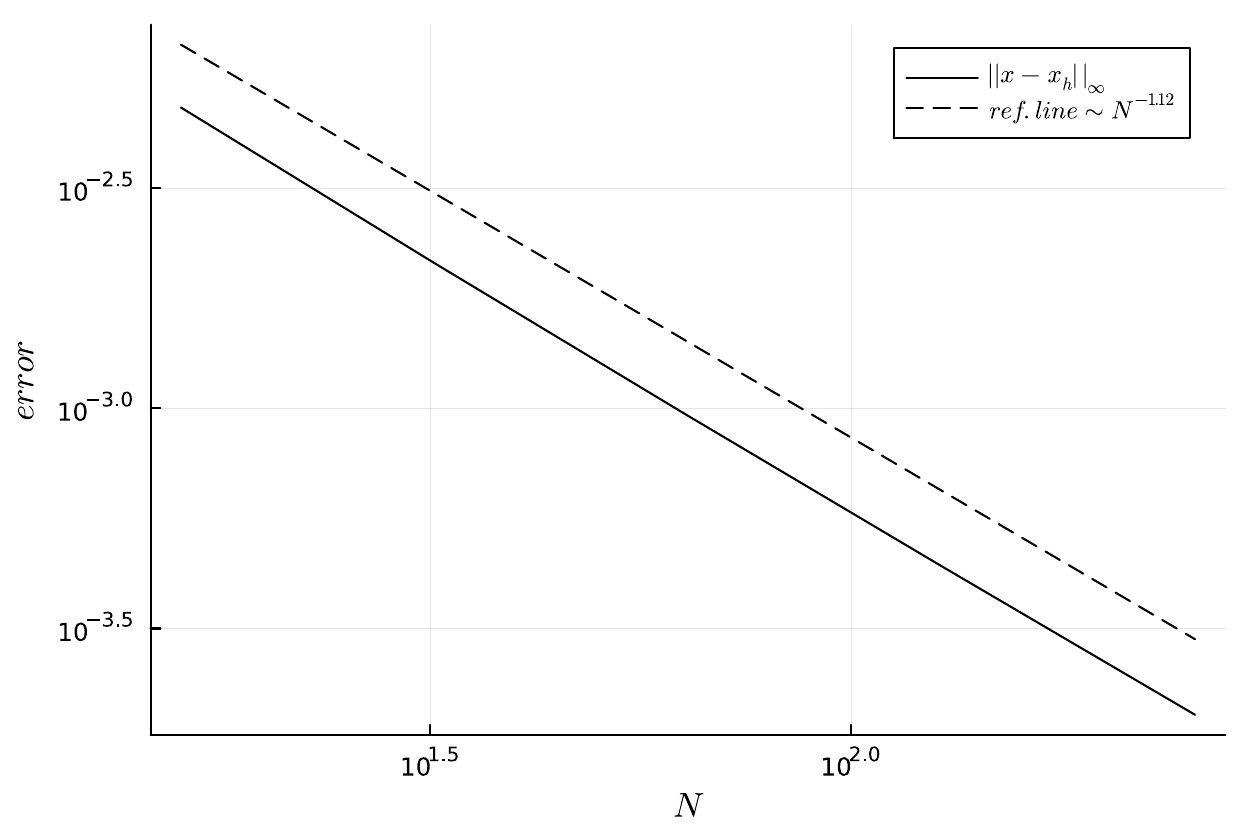}
	\caption{$L^\infty$ error of the linear collocation scheme applied to \eqref{eqn:NumericalLogarithm}. On the left: H\"older solution \eqref{eqn:NumericalHolder}, on the right: logarithmic solution \eqref{eqn:NumericalLogarithm}.}
	\label{fig:ErrorNonsmooth}
\end{figure}

\subsection{Smooth solutions}
Here, we test our spectral collocation scheme \eqref{eqn:SpectralCollocationScheme} on the logarithmic example \eqref{eqn:NumericalLogarithm} to assess how it performs in the non-smooth scenario, and the following very smooth solution
\begin{equation}\label{eqn:NumericalSmooth}
	x(t) = \int_0^t e^{t-s} (1 + x(s)) ds, \quad x(s) = \frac{1}{2}\left(e^{2t}-1\right), \quad 0 \leq t \leq 1. 
\end{equation}

The error of the spectral collocation scheme applied to the smooth problem \eqref{eqn:NumericalSmooth} is depicted in Fig. \ref{fig:ErrorSmooth}. Notice that the plot is on the semilog scale, on which the error decreases linearly with the number of degrees of freedom $N$. This indicates spectral (exponential) accuracy. At $N=14$, the error saturates at the machine precision. Therefore, we can conclude that only several degrees of freedom are necessary to obtain an extremely good approximation to the solution. We have tested this scheme along with some other examples with essentially the same result. In each of these experiments, the average computation time \emph{did not exceed} $1-2$ seconds!

On the other hand, the spectral method does not perform well for the non-smooth problem \eqref{eqn:NumericalLogarithm}. This is, of course, anticipated, since according to Theorem \ref{thm:SpectralCollocationScheme} the error is bounded by a sufficiently high derivative. Since in this example, all derivatives blow up, we cannot expect that the scheme will converge. As shown in Fig. \ref{fig:ErrorSmooth}, the error saturates to some positive value. This shows that using piecewise linear solvers for non-smooth solutions is the method of choice, while the spectral scheme is optimal for smooth solutions. 

\begin{figure}
	\centering
	\includegraphics[scale = 0.42]{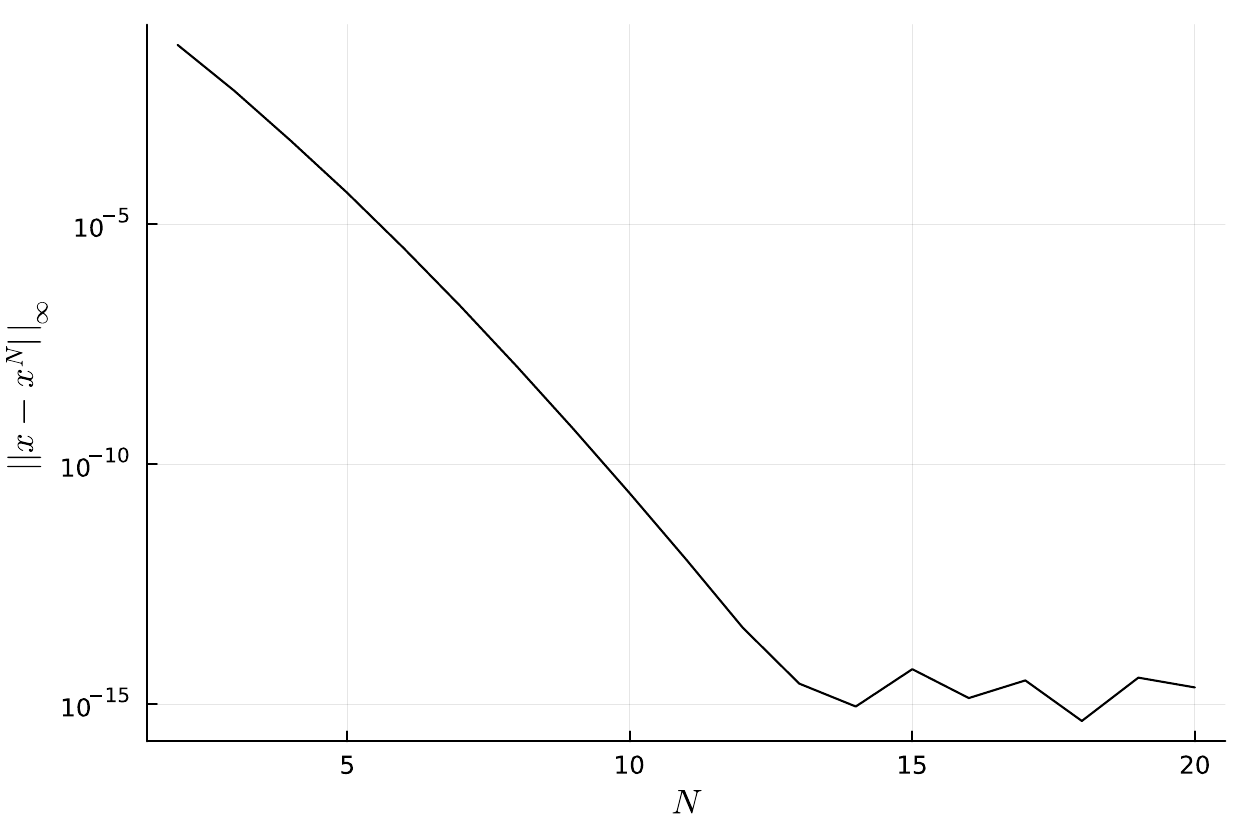}
	\includegraphics[scale = 0.42]{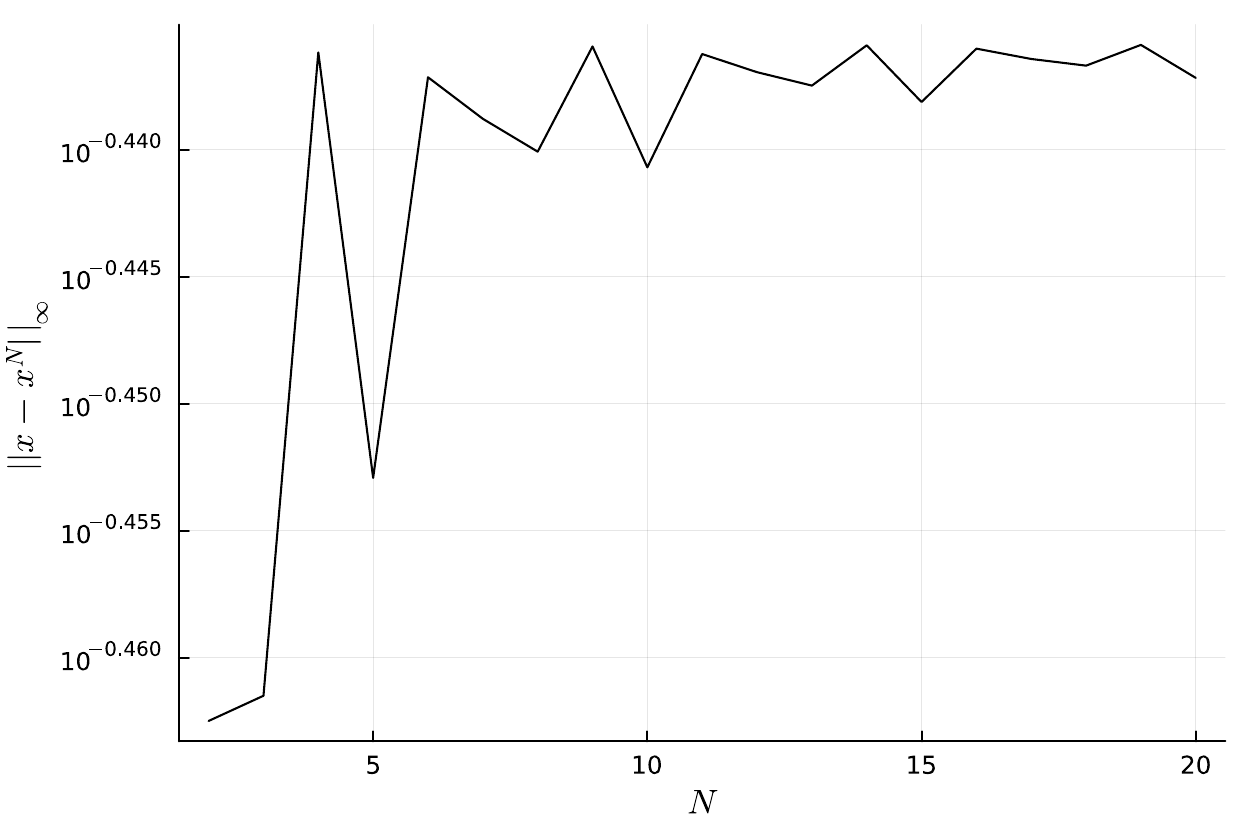}
	\caption{$L^\infty$ error of the spectral collocation scheme applied to \eqref{eqn:NumericalSmooth}. On the left: smooth solution \eqref{eqn:NumericalSmooth}, on the right: logarithmic solution \eqref{eqn:NumericalLogarithm}. }
	\label{fig:ErrorSmooth}
\end{figure}

\section{Conclusion}
We have introduced and analyzed a class of non-linear Volterra equations generalizing capillary rise models with only continuity hypotheses on the kernel and non-linearity. Using spaces with prescribed modulus of continuity and integral type H\"older spaces, we proved the existence of solutions in $J_{\alpha,\beta}[0,1]$, derived the first sharp interpolation error bounds in these norms, and established convergence of a piecewise‐linear collocation scheme.  For smoother cases, our Legendre-node spectral collocation method achieves exponential convergence. Numerical experiments validate our theoretical findings and demonstrate robustness to low regularity.

Future research includes developing a posteriori estimators and adaptive mesh refinement in the \(J_{\alpha,\beta}\) setting or in the variable H\"older spaces.  Additional directions are the integration of fractional or stochastic formulations and the design of fast hierarchical solvers to handle weak singularities with near‐linear complexity. Also, we would like to consider parallel in time integration to make use of the multithreaded possibilities of modern computers. 

\section*{Acknowledgment}
Ł.P. is supported by the Polish National Agency for Academic Exchange (NAWA) under the Bekker Programme with the signature BPN/BEK/2024/1/00002. J.C. and K. S. are partially supported by the project PID2023-148028NB-I00.


\begin{thebibliography}{10}
	
	\bibitem{brunner2017volterra}
	Hermann Brunner.
	\newblock {\em {V}olterra integral equations: an introduction to theory and
		applications}, volume~30.
	\newblock Cambridge University Press, 2017.
	
	\bibitem{deutsch1990inversion}
	M~Deutsch, A~Notea, and D~Pal.
	\newblock Inversion of abel's integral equation and its application to ndt by
	x-ray radiography.
	\newblock {\em NDT international}, 23(1):32--38, 1990.
	
	\bibitem{beck1996comparison}
	James~V Beck, Ben Blackwell, and A~Haji-Sheikh.
	\newblock Comparison of some inverse heat conduction methods using experimental
	data.
	\newblock {\em International Journal of Heat and Mass Transfer},
	39(17):3649--3657, 1996.
	
	\bibitem{lamm2000survey}
	Patricia~K Lamm.
	\newblock A survey of regularization methods for first-kind {V}olterra
	equations.
	\newblock In {\em Surveys on solution methods for inverse problems}, pages
	53--82. Springer, 2000.
	
	\bibitem{anderssen2008volterra}
	RS~Anderssen, Arthur~Russell Davies, and FR~De~Hoog.
	\newblock On the {V}olterra integral equation relating creep and relaxation.
	\newblock {\em Inverse Problems}, 24(3):035009, 2008.
	
	\bibitem{eskola2012geophysical}
	Lauri Eskola.
	\newblock {\em Geophysical interpretation using integral equations}.
	\newblock Springer Science \& Business Media, 2012.
	
	\bibitem{brauer1975nonlinear}
	Fred Brauer.
	\newblock On a nonlinear integral equation for population growth problems.
	\newblock {\em SIAM Journal on Mathematical Analysis}, 6(2):312--317, 1975.
	
	\bibitem{brauer1976constant}
	Fred Brauer.
	\newblock Constant rate harvesting of populations governed by {V}olterra
	integral equations.
	\newblock {\em Journal of Mathematical Analysis and Applications},
	56(1):18--27, 1976.
	
	\bibitem{guan2022global}
	Xinzhu Guan, Fan Yang, Yongli Cai, and Weiming Wang.
	\newblock Global stability of an influenza: A model with vaccination.
	\newblock {\em Applied Mathematics Letters}, 134:108322, 2022.
	
	\bibitem{curle1978solution}
	SN~Curle.
	\newblock Solution of an integral equation of lighthill.
	\newblock {\em Proceedings of the Royal Society of London. A. Mathematical and
		Physical Sciences}, 364(1718):435--441, 1978.
	
	\bibitem{keller1981propagation}
	Jakob~J Keller.
	\newblock Propagation of simple non-linear waves in gas filled tubes with
	friction.
	\newblock {\em Zeitschrift f{\"u}r angewandte Mathematik und Physik ZAMP},
	32(2):170--181, 1981.
	
	\bibitem{plociniczak2018existence}
	{\L}ukasz P{\l}ociniczak and Mateusz {\'S}wita{\l}a.
	\newblock Existence and uniqueness results for a time-fractional nonlinear
	diffusion equation.
	\newblock {\em Journal of mathematical analysis and applications},
	462(2):1425--1434, 2018.
	
	\bibitem{lopez2024time}
	Belen Lopez, Hanna Okrasi{\'n}ska-P{\l}ociniczak, {\L}ukasz P{\l}ociniczak, and
	Juan Rocha.
	\newblock Time-fractional porous medium equation: {E}rd{\'e}lyi--{K}ober
	integral equations, compactly supported solutions, and numerical methods.
	\newblock {\em Communications in Nonlinear Science and Numerical Simulation},
	128:107692, 2024.
	
	\bibitem{gerolymatou2006modelling}
	E~Gerolymatou, I~Vardoulakis, and R~Hilfer.
	\newblock Modelling infiltration by means of a nonlinear fractional diffusion
	model.
	\newblock {\em Journal of physics D: Applied physics}, 39(18):4104, 2006.
	
	\bibitem{zhu2018new}
	Song-Ping Zhu, Xin-Jiang He, and Xiaoping Lu.
	\newblock A new integral equation formulation for american put options.
	\newblock {\em Quantitative Finance}, 18(3):483--490, 2018.
	
	\bibitem{zappala2024learning}
	Emanuele Zappala, Antonio Henrique de~Oliveira Fonseca, Josue~Ortega Caro,
	Andrew~Henry Moberly, Michael~James Higley, Jessica Cardin, and David~van
	Dijk.
	\newblock Learning integral operators via neural integral equations.
	\newblock {\em Nature Machine Intelligence}, 6(9):1046--1062, 2024.
	
	\bibitem{cai2021lucas}
	Jianchao Cai, Tingxu Jin, Jisheng Kou, Shuangmei Zou, Junfeng Xiao, and
	Qingbang Meng.
	\newblock Lucas--washburn equation-based modeling of capillary-driven flow in
	porous systems.
	\newblock {\em Langmuir}, 37(5):1623--1636, 2021.
	
	\bibitem{plociniczak2018monotonicity}
	{\L}ukasz P{\l}ociniczak and Mateusz {\'S}wita{\l}a.
	\newblock Monotonicity, oscillations and stability of a solution to a nonlinear
	equation modelling the capillary rise.
	\newblock {\em Physica D: Nonlinear Phenomena}, 362:1--8, 2018.
	
	\bibitem{okrasinska2020solvability}
	Hanna Okrasi{\'n}ska-P{\l}ociniczak, {\L}ukasz P{\l}ociniczak, Juan Rocha, and
	Kishin Sadarangani.
	\newblock Solvability in {H}{\"o}lder spaces of an integral equation which
	models dynamics of the capillary rise.
	\newblock {\em Journal of Mathematical Analysis and Applications},
	490(1):124237, 2020.
	
	\bibitem{bushell1990nonlinear}
	PJ~Bushell and W~Okrasinski.
	\newblock Nonlinear {V}olterra integral equations with convolution kernel.
	\newblock {\em Journal of the London Mathematical Society}, 2(3):503--510,
	1990.
	
	\bibitem{olmstead1995coupled}
	WE~Olmstead, Catherine~A Roberts, and Keng Deng.
	\newblock Coupled {V}olterra equations with blow-up solutions.
	\newblock {\em The Journal of Integral Equations and Applications}, pages
	499--516, 1995.
	
	\bibitem{kilbas2006theory}
	Aleksandrovich Kilbas, Anatoli{\u\i}, Hari~M Srivastava, and Juan~J Trujillo.
	\newblock {\em Theory and applications of fractional differential equations},
	volume 204.
	\newblock elsevier, 2006.
	
	\bibitem{grunding2020enhanced}
	Dirk Gr{\"u}nding.
	\newblock An enhanced model for the capillary rise problem.
	\newblock {\em International Journal of Multiphase Flow}, 128:103210, 2020.
	
	\bibitem{quere1997inertial}
	David Qu{\'e}r{\'e}.
	\newblock Inertial capillarity.
	\newblock {\em Europhysics Letters}, 39(5):533, 1997.
	
	\bibitem{zhmud2000dynamics}
	BV~Zhmud, F~Tiberg, and K~Hallstensson.
	\newblock Dynamics of capillary rise.
	\newblock {\em Journal of colloid and interface science}, 228(2):263--269,
	2000.
	
	\bibitem{plociniczak2021oscillatory}
	{\L}ukasz P{\l}ociniczak and Mateusz {\'S}wita{\l}a.
	\newblock Oscillatory behaviour analysis of a liquid rise in cylindrical
	capillaries.
	\newblock {\em Communications in Nonlinear Science and Numerical Simulation},
	96:105647, 2021.
	
	\bibitem{fricke2023analytical}
	Mathis Fricke, Suraj Raju, Regine von Klitzing, Jo{\"e}l De~Coninck, Dieter
	Bothe, et~al.
	\newblock An analytical study of capillary rise dynamics: Critical conditions
	and hidden oscillations.
	\newblock {\em Physica D: Nonlinear Phenomena}, 455:133895, 2023.
	
	\bibitem{zhang2018dynamics}
	Xiang Zhang.
	\newblock Dynamics of a nonlinear equation modelling the capillary rise.
	\newblock {\em Physica D: Nonlinear Phenomena}, 384:34--38, 2018.
	
	\bibitem{alassar2022note}
	Rajai~S Alassar.
	\newblock Note on capillary rise in elliptic and semielliptic tubes.
	\newblock {\em Journal of Applied Mathematics}, 2022(1):3041527, 2022.
	
	\bibitem{metwali2022solvability}
	Mohamed~MA Metwali.
	\newblock Solvability in weighted l1-spaces for the m-product of integral
	equations and model of the dynamics of the capillary rise.
	\newblock {\em Journal of Mathematical Analysis and Applications},
	515(2):126461, 2022.
	
	\bibitem{rapajic2025modelling}
	Isidora Rapaji{\'c}, Srboljub Simi{\'c}, and Endre S{\"u}li.
	\newblock Modelling capillary rise with a slip boundary condition:
	Well-posedness and long-time dynamics of solutions to washburn's equation.
	\newblock {\em arXiv preprint arXiv:2502.14312}, 2025.
	
	\bibitem{appell2021holder}
	J{\"u}rgen Appell, Aldona Dutkiewicz, Belen Lopez, Simon Reinwand, and Kishin
	Sadarangani.
	\newblock H{\"o}lder-type spaces, singular operators, and fixed point theorems.
	\newblock {\em Fixed Point Theory}, 22(1), 2021.
	
	\bibitem{appell1987application}
	J{\"u}rgen Appell, Espedito De~Pascale, and Petr~P Zabrejko.
	\newblock An application of bn {S}adovskij’s fixed point principle to
	nonlinear singular equations.
	\newblock {\em Zeitschrift f{\"u}r Analysis und ihre Anwendungen},
	6(3):193--208, 1987.
	
	\bibitem{cichon2022banach}
	Mieczys{\l}aw Cicho{\'n} and Mohamed~MA Metwali.
	\newblock On the {B}anach algebra of integral-variation type {H}{\"o}lder
	spaces and quadratic fractional integral equations.
	\newblock {\em Banach Journal of Mathematical Analysis}, 16(2):34, 2022.
	
	\bibitem{atkinson1997numerical}
	Kendall~E Atkinson.
	\newblock {\em The numerical solution of integral equations of the second
		kind}, volume~4.
	\newblock Cambridge university press, 1997.
	
	\bibitem{brunner2004collocation}
	Hermann Brunner.
	\newblock {\em Collocation methods for {V}olterra integral and related
		functional differential equations}, volume~15.
	\newblock Cambridge university press, 2004.
	
	\bibitem{riele1982collocation}
	Herman JJ~TE Riele.
	\newblock Collocation methods for weakly singular second-kind {V}olterra
	integral equations with non-smooth solution.
	\newblock {\em IMA Journal of Numerical Analysis}, 2(4):437--449, 1982.
	
	\bibitem{ma2023fractional}
	Zheng Ma and Chengming Huang.
	\newblock Fractional collocation method for third-kind {V}olterra integral
	equations with nonsmooth solutions.
	\newblock {\em Journal of Scientific Computing}, 95(1):26, 2023.
	
	\bibitem{caballero2025collocation}
	Josefa Caballero, Hanna Okrasi{\'n}ska-P{\l}ociniczak, {\L}ukasz
	P{\l}ociniczak, and Kishin Sadarangani.
	\newblock Collocation method for a functional equation arising in behavioral
	sciences.
	\newblock {\em Journal of Computational and Applied Mathematics}, 458:116343,
	2025.
	
	\bibitem{brunner2011analysis}
	Hermann Brunner, Hehu Xie, and Ran Zhang.
	\newblock Analysis of collocation solutions for a class of functional equations
	with vanishing delays.
	\newblock {\em IMA Journal of Numerical Analysis}, 31(2):698--718, 2011.
	
	\bibitem{tang2008spectral}
	Tao Tang, Xiang Xu, and Jin Cheng.
	\newblock On spectral methods for {V}olterra integral equations and the
	convergence analysis.
	\newblock {\em Journal of Computational Mathematics}, pages 825--837, 2008.
	
	\bibitem{samadi2012spectral}
	OR~Navid Samadi and Emran Tohidi.
	\newblock The spectral method for solving systems of {V}olterra integral
	equations.
	\newblock {\em Journal of Applied Mathematics and Computing}, 40:477--497,
	2012.
	
	\bibitem{chen2010convergence}
	Yanping Chen and Tao Tang.
	\newblock Convergence analysis of the {J}acobi spectral-collocation methods for
	{V}olterra integral equations with a weakly singular kernel.
	\newblock {\em Mathematics of computation}, 79(269):147--167, 2010.
	
	\bibitem{sheng2014multistep}
	Chang-Tao Sheng, Zhong-Qing Wang, and Ben-Yu Guo.
	\newblock A multistep {L}egendre--{G}auss spectral collocation method for
	nonlinear {V}olterra integral equations.
	\newblock {\em SIAM Journal on Numerical Analysis}, 52(4):1953--1980, 2014.
	
	\bibitem{zakeri2010sinc}
	Gholam-Ali Zakeri and Mitra Navab.
	\newblock Sinc collocation approximation of non-smooth solution of a nonlinear
	weakly singular {V}olterra integral equation.
	\newblock {\em Journal of Computational Physics}, 229(18):6548--6557, 2010.
	
	\bibitem{canuto2006spectral}
	Claudio Canuto, M~Youssuff Hussaini, Alfio Quarteroni, and Thomas~A Zang.
	\newblock {\em Spectral methods}, volume 285.
	\newblock Springer, 2006.
	
	\bibitem{caballero2025functional}
	Josefa Caballero, Hanna Okrasi{\'n}ska-P{\l}ociniczak, {\L}ukasz
	P{\l}ociniczak, and Kishin Sadarangani.
	\newblock Functional equation arising in behavioral sciences: solvability and
	collocation scheme in h{\"o}lder spaces.
	\newblock {\em Applied Numerical Mathematics}, 212:268--282, 2025.
	
	\bibitem{bahouri2011fourier}
	Hajer Bahouri, Jean-Yves Chemin, and Rapha{\"e}l Danchin.
	\newblock Fourier analysis and nonlinear partial differential equations, 2011.
	
	\bibitem{devore1993constructive}
	Ronald~A DeVore and George~G Lorentz.
	\newblock {\em Constructive approximation}, volume 303.
	\newblock Springer Science \& Business Media, 1993.
	
	\bibitem{okrasinska2025functional}
	Hanna Okrasi{\'n}ska-P{\l}ociniczak, {\L}ukasz P{\l}ociniczak, Kishin
	Sadarangani, et~al.
	\newblock Functional equation arising in behavioral sciences: solvability and
	collocation scheme in {H}{\"o}lder spaces.
	\newblock {\em Applied Numerical Mathematics}, 212, 2025.
	
	\bibitem{bezanson2017julia}
	Jeff Bezanson, Alan Edelman, Stefan Karpinski, and Viral~B Shah.
	\newblock Julia: A fresh approach to numerical computing.
	\newblock {\em SIAM Review}, 59(1):65--98, 2017.
	
\end{thebibliography}

\end{document}